\pdfoutput=1
\RequirePackage{ifpdf}
\ifpdf % We~are running pdfTeX in pdf mode
\documentclass[pdftex]{sigma}
\else
\documentclass{sigma}
\fi

\usepackage{xspace}
\usepackage{tikz}
\usepackage[all]{xy}

\numberwithin{equation}{section}

\newtheorem{Theorem}{Theorem}[section]
\newtheorem{Corollary}[Theorem]{Corollary}
\newtheorem{Lemma}[Theorem]{Lemma}
\newtheorem{Proposition}[Theorem]{Proposition}
\newtheorem{propdefn}[Theorem]{Proposition-Definition}
 { \theoremstyle{definition}
\newtheorem{Definition}[Theorem]{Definition}

\newtheorem{Example}[Theorem]{Example}
\newtheorem{Remark}[Theorem]{Remark}
	\newtheorem{coex}[Theorem]{Counterexample}
}

	 % \limits to have the right place of the ending points of integration/sumation

	\newcommand{\nc}{\newcommand}

	\newcommand{\R}{\mathbb{R}}
	\newcommand{\K}{\mathbb{K}}

	\newcommand{\N}{\mathbb{N}}

	%%%---------------

	%Bbb-----------------------------
	%\nc{\R}{\mathbb{R}}
	\nc{\C}{\mathbb{C}}

	%--------------------------------

	\newcommand {\calf}{{\mathcal {F}}}

	\newcommand {\calp}{{\mathcal {P}}}

	\newcommand {\calw}{{\mathcal {W}}}

	\nc{\vep}{\varepsilon}

	\nc{\mtop}{\top}
	\nc{\grt}{\mathrm{grt}\,}		%greatest element
		
\nc{\cG}{\overline{G}}
	\nc{\mrm}[1]{{\rm #1}}
	\nc{\depth}{{\mrm d}}
	\nc{\id}{\mrm{id}}
	\nc{\Id}{\mathrm{Id}}
	\nc{\Irr}{\mathrm{Irr}}
	\nc{\Span}{\mathrm{span}}
	\nc{\mapped}{operated\xspace}
	\nc{\Mapped}{Operated\xspace}

	\nc{\ot}{\otimes}
	\nc{\Hom}{\mathrm{Hom}}
	\nc{\CS }{\mathcal{CS}}
	\nc{\bfk}{\mathbf{K}}
	\nc{\lwords}{\calw}
	\nc{\ltrees}{\calf}
	\nc{\lpltrees}{\calp}
	\nc{\Map}{\mathrm{Map}}
	\nc{\rep}{\beta}
	\nc{\free}[1]{\bar{#1}}
	\nc{\OS}{\mathbf{OS}}
	\nc{\OM}{\mathbf{OM}}
	\nc{\OA}{\mathbf{OA}}
	\nc{\based}{based\xspace}
	\nc{\tforall}{\text{ for all }}
	\nc{\hwp}{\widehat{P}^\calw}
	\nc{\sha}{{\mbox{\cyr X}}}
	\font\cyr=wncyr10 
	\nc{\Mor}{\mathrm{Mor}}

	\nc{\oF}{{\overline{F}}}
	\nc{\mge}{_{bu}\!\!\!\!{}}
	
	\nc {\conefamilyc}{{\underline{{C}}}}
	\newcommand{\loc}{locality\xspace}
	\newcommand{\Loc}{Locality\xspace}
	
	\nc{\supp}{\mathrm{supp}\ }
	\nc{\subsp}{subspace\xspace}
	
	\newcommand{\pone}{separating\xspace }
	\def \Pone {Separating\xspace }
	
	\def \ptwob {separated\xspace }

	\nc{\tsum}{\bar{+}}

\nc{\LR}{\mathrm{LVR}}
\nc{\LLR}{\mathrm{LLR}}
\nc{\LGR}{\mathrm{ LGR}}

\begin{document}
%\allowdisplaybreaks

\newcommand{\arXivNumber}{2007.03003}

\renewcommand{\thefootnote}{}

\renewcommand{\PaperNumber}{027}

\FirstPageHeading

\ShortArticleName{From Orthocomplementations to Locality}

\ArticleName{From Orthocomplementations to Locality\footnote{This paper is a~contribution to the Special Issue on Noncommutative Manifolds and their Symmetries in honour of~Giovanni Landi. The full collection is available at \href{https://www.emis.de/journals/SIGMA/Landi.html}{https://www.emis.de/journals/SIGMA/Landi.html}}}

\Author{Pierre CLAVIER~$^{\rm a}$, Li GUO~$^{\rm b}$, Sylvie PAYCHA~$^{\rm a}$ and Bin ZHANG~$^{\rm c}$}

\AuthorNameForHeading{P.~Clavier, L.~Guo, S.~Paycha and B.~Zhang}

\Address{$^{\rm a)}$~Institute of Mathematics, 	University of Potsdam, D-14476 Potsdam, Germany\\
\hphantom{$^{\rm a)}$}~(S.~Paycha on leave from Universit\'e Clermont-Auvergne, Clermont-Ferrand, France)}
\EmailD{\href{mailto:clavier@math.uni-potsdam.de}{clavier@math.uni-potsdam.de}, \href{mailto:paycha@math.uni-potsdam.de}{paycha@math.uni-potsdam.de}}

\Address{$^{\rm b)}$~Department of Mathematics and Computer Science, Rutgers University,\\
\hphantom{$^{\rm b)}$}~Newark, NJ 07102, USA}
\EmailD{\href{mailto:liguo@rutgers.edu}{liguo@rutgers.edu}}

\Address{$^{\rm c)}$~School of Mathematics, Yangtze Center of Mathematics,	Sichuan University,\\
\hphantom{$^{\rm c)}$}~Chengdu, 610064, China}
\EmailD{\href{mailto:zhangbin@scu.edu.cn}{zhangbin@scu.edu.cn}}

\ArticleDates{Received July 06, 2020, in final form March 02, 2021; Published online March 22, 2021}

\Abstract{After some background on lattices, the locality framework introduced in earlier work by the authors is extended to cover posets and lattices. We~then extend the correspondence between Euclidean structures on vector spaces and orthogonal complementations to a~one-one correspondence between a class of locality structures and orthocomplementations on bounded lattices. This recasts in the context of renormalisation classical results in lattice theory.}

\Keywords{locality; lattice; poset; orthocomplementation; renormalisation}

\Classification{06C15; 08A55; 81T15; 15A63}

\begin{flushright}
\begin{minipage}{70mm}
\it This paper is dedicated to Gianni Landi\\ on the occasion of his sixtieth birthday
\end{minipage}
\end{flushright}

\renewcommand{\thefootnote}{\arabic{footnote}}
\setcounter{footnote}{0}

\section{Introduction}
\subsection{The general setup and our aims}

The notion of complementation, or complement map~-- roughly speaking, an operation which separates a subset $M$ of a given set $X$ from another
subset $M'$, its complement, so that the information on $X$ is split into the part on $M$ and the part on the complement $M'$~-- provides a \pone tool that is ubiquitous in mathematics.
For example, the notion of (ortho)complementation naturally arises in the context of {\it axiomatic quantum mechanics}, where various types of binary relations are used, that are defined on the set of all questions testable
for a given physical system~\cite{CN}.
Also, as we shall see below, the notion of complementation
 plays a central role in {\it renormalisation procedures}. Typical examples of complementations are the set complementations and the orthogonal complementations, taking respectively a subset of~a~reference set to its complement in the set, a linear subspace of~a~reference Euclidean vector space to its orthogonal complement in the space.

Our guiding example throughout this paper is the set $(G(V), \preceq)$ of linear subspaces in a finite dimensional vector space
$V$ equipped with the partial order $\preceq $ corresponding to the inclusion of~linear subspaces.

 An inner product $Q$ on $V$ gives rise to a
complementation on $G(V)$:
\begin{gather*}
\Psi^Q\colon\quad	G(V)\longrightarrow G(V),\qquad
	W \longmapsto	 W^{\perp^Q},
\end{gather*}
 where $W^{\perp^Q}:=\{v\in V\,|\, Q(v, w)=0,\, \forall\, w\in W\}$ is the $Q$-orthogonal complement of~$W$. It also gives rise to a symmetric binary relation on $G(V)$, namely the orthogonality relation
\begin{gather}
\label{eq:introQW}
W_1\perp^Q W_2\Longleftrightarrow Q(w_1, w_2)=0\qquad
\forall\, (w_1, w_2)\in W_1\times W_2,
\end{gather}
and we have
\begin{gather*}
W_1\perp^Q W_2\Longleftrightarrow W_2\subseteq \Psi^Q(W_1),\qquad
W_2=\Psi^Q(W_1)\Longleftrightarrow W_2=\grt \big\{ W \preceq V\,|\, W\perp^QW_1\big\}.
\end{gather*}
Here $\grt $ means the greatest element by inclusion. This establishes a one-one correspondence between locality relations and orthocomplementations given by scalar products:
\begin{gather}\label{eq:correspQ}
\perp^Q\longleftrightarrow \Psi^Q.
\end{gather}
The symmetric binary relation $\perp^Q$ on $G(V)$ is a particular instance of the more general notion of~{\it locality relation} introduced in~\cite{CGPZ1}. Since the poset $(G(V), \preceq)$ equipped with the intersection and the sum (of two vector spaces) is a lattice, the one-one correspondence~\eqref{eq:correspQ} serves as a motivation to investigate the relation between complementations and \loc relations on~lattices.

So we ask, under what conditions one can derive on a lattice, a \loc relation from a~com\-p\-le\-men\-tation.

Theorem \ref {thm:complement_strongly_local} provides an answer to this question:
for any bounded lattice, there is a one-one correspondence
\begin{gather}\label{eq:corresp}
\text{orthocomplementations} \quad \longleftrightarrow\quad \text{strongly \pone \loc relations.} \end{gather}
When applied to
	the lattice $G(V)$, this generalises the correspondence (\ref{eq:correspQ}) between orthogonality and orthogonal complement on vector spaces.
	
We were informed by a referee report on a previous version of this paper, that this one-one correspondence was already known and proven in~\cite{CM}, this leading us to some substantial reformulations and restructuring.\footnote{In particular, we learned that a poset with locality amounts to a weak degenerate orthogonal poset in the sense of \cite[Definition 2.1]{CM}. We~nevertheless use a slightly different terminology which we find well-suited for the locality setup we have in mind.}\ Although the one-one correspondence~\eqref{eq:corresp} seems to be common knowledge in the lattice community, we believe that recasting this known result in the context of renormalisation is relevant for the mathematical physics community. We~feel that this explo\-ratory and survey type article serves to promote the notion of orthocomplements beyond the lattice theory community, in the mathematical physics community where complements are constantly used in the context of renormalisation, as we shall now explain.

\subsection{ Locality and complements in renormalisation}

Complementations play an essential role in renormalisation, where they arise in various disguise and are used to separate divergent terms from convergent terms.

{\samepage
In A.~Connes and D.~Kreimer's algebraic Birkhoff factorisation approach to renormalisation
\cite{CK,M}, the coproduct is typically built from a (relative) complementation on a poset $(X, \leq)$ by means of
$\Delta x=\sum_{y} x \otimes (x\setminus y)$, where $ x\setminus y$ is a complement to an element $y\leq x$. This holds for the coproduct on rooted trees for which we can view the crown of
a rooted tree as the complement of its trunk (a~sub-rooted tree) after an admissible cut (see, e.g.,~\cite{F1,F2}), for the coproduct on~graphs, with the contracted graph $\Gamma\backslash \gamma$ corresponding to
the complement of a~subgraph $\gamma$ inside a connected 1 particle irreducible Feynman graph $\Gamma$ (see, e.g.,~\cite{M}).

}

%\looseness=1
Complementations also arise in generalised Euler--Maclaurin formulae, which relate
sums to~integrals. A systematic choice of complement (called a rigid complement) of a linear subspace of~a vector space was used in~\cite{GP} to interpolate between exponential sums and
exponential integrals over rational polytopes in a rational vector space. A notion of ``transversal cone $ C\setminus F$ to a face $F$ of a cone $C$" was used as a complementation $F\mapsto C\setminus F$ by N.~Berline and M.~Vergne~\cite{BV}, to prove a local Euler--Maclaurin formula on polytopes. Cones form a poset for the relation ``$F\leq C$ if $F$ is a face of $C$", and we could reinterpret the Euler--Maclaurin formula on cones, as an algebraic Birkhoff-factorisation by means of the coproduct $\Delta C=\sum_{F\leq C} (C\setminus F)\otimes F$~\cite{GPZ1}.

Renormalisation issues also underly our quest for a description of complementations on (finite dimensional) vector spaces. To explain how this motivates our comparative study of locality relations and complementations, let us first describe an abstract framework for a renormalisation scheme in the context of locality structures~\cite{CGPZ1, CGPZ2}:
\begin{itemize}\itemsep=0pt
\item an (locality) algebra $ \left({\mathcal A}, \top_A, m_A\right)$ ($m_A$ stands for the product and $\top_A$ for the locality relation) might it be of Feynman graphs, trees, or cones,
\item an algebra of meromorphic germs at zero $\left({ {\mathcal M} } , \cdot\right)$ might it be the algebra ${\mathcal M}(\C)$ of meromorphic germs as in A.~Connes and D.~Kreimer's algebraic Birkhoff factorisation approach~\cite{CK} or the algebra ${\mathcal M}(\C^\infty)$ of multi-variable meromorphic germs with linear poles studied in~\cite{GPZ2},
\item a (locality) morphism $ \Phi\colon ({\mathcal A}, \top_A, m_A)
\longrightarrow({\mathcal M}, \cdot)$ such as Feynman integrals~\cite{DZ}, branched zeta functions~\cite{CGPZ3, CGPZ4} and conical zeta functions~\cite{GPZ1}.
		\end{itemize}
The locality principle translates to the partial multiplicativity of $\Phi$:
\begin{gather*}
a_1 \top_A a_2\Longrightarrow \Phi(m_A(a_1 ,a_2))= \Phi (a_1) \cdot \Phi(a_2).
\end{gather*}
Renormalising consists in building a (locality) character $\Phi^{\rm ren}\colon ({\mathcal A}, \top_A, m_A)\longrightarrow (\C, \cdot)$
\begin{gather*}
a_1 \top_A\, a_2\Longrightarrow \Phi^{ \rm ren}({ m_A}( a_1 ,a_2))= { \Phi}^{ \rm ren} ( a_1 ) \cdot{ \Phi}^{\rm ren}(a_2).
\end{gather*}

 To build $ \Phi^{ \rm ren} $, one first needs to separate the holomorphic part ${ \Phi}_+$ from the polar part ${ \Phi}_-$ of ${ \Phi}$ and then to evaluate it at zero setting $\Phi^{\rm ren}:= \Phi_+(0)$. This splitting relies on the splitting $ {\mathcal M} = {\mathcal M}_+\oplus {\mathcal M}_-$ of the algebra of meromorphic germs at zero into a holomorphic part $ {\mathcal M}_+$ and a polar part ${\mathcal M}_-$. The map $\Phi_+$ is built:
\begin{itemize}\itemsep=0pt
\item in one variable by means of an algebraic Birkhoff factorisation following A.~Connes and
D.~Kreimer~\cite{CK},
\item in several variables as $\Phi_+:=\pi_+\circ \Phi$ via a projection map $\pi_+$ onto
${\mathcal M}_+$ (see, e.g.,~\cite{CGPZ2}).
\end{itemize}
			We adopt the second approach which can be interpreted as a minimal subtraction scheme in~several variables. To build the projection map $\pi_+$, we use Laurent expansions of meromorphic germs in several variables with linear poles (see~\cite{GPZ2}) whose construction in turn requires a~(filtered) separating device on the underlying spaces $V=\C^k$, $k\in \N$.
			
The resulting renormalised map $\Phi^{\rm ren}=\pi_+\circ \Phi$ depends on the choice of the projection $\pi_+$, which in turn is dictated by the choice of splitting $ {\mathcal M} = {\mathcal M}_+\oplus {\mathcal M}_-$. The passage from one splitting to another splitting is encoded in a renormalisation group which we hope to describe in forthcoming work. In~\cite{GPZ2} we built Laurent expansions using the orthogonality relation $Q$ on $V:=\C^k$ as a separating device, leading to an orthogonal projection $\pi_+^Q$. The present paper investigates alternative separating devices on $V $ via separating devices on the lattice $G(V)$ which we intend to use to extend our construction of Laurent expansions beyond the ones obtained by orthogonal splittings. The group of transformations of meromorphic germs that preserve holomorphic ones plays a central role in the context of renormalisation since its elements are transformations which take one renormalised value to another and can therefore be interpreted as elements of a hypothetical renormalised group.

\subsection{Plan of the paper}

Section~\ref{section:lattices} is a review of the basics of lattice theory, such as distributivity (Proposition-Defi\-ni\-tion~\ref{propdef:distrib1}) and modularity (Definition~\ref{defn:modularlattice}) of lattices, which we spell out in order to later generalise them to the locality setup.

Section~\ref{sec:loclat} is dedicated to lattices equipped with a locality relation, which we call locality lattices (Definition~\ref{def:loclattice}). We~first define and characterise locality posets (Proposition-Definition~\ref{de:locposet}) after which we characterise locality lattices (Proposition~\ref{prop:locvee}).
Whereas the subspace lattice $G(V)$ is not a
\loc lattice for the ``disjointedness" \loc relation $W_1\top W_2\Leftrightarrow W_1\cap W_2=\{0\}$, it is for the \loc relation
$\perp^Q$ of (\ref{eq:introQW}), see Example~\ref{ex:GVlocality}.

We then review the notion of complement (Definition~\ref{defn:relcomplt}) and orthocomplement (Definition~\ref{defn:orthocomplemented_lattice}) on lattices and discuss the strongly separating property (Proposition~\ref{prop:Psistronglysep}) of lattices with orthocomplement. This is later used to classify orthocomplements on the subspace lattice of $\R^2$ (Corollary~\ref{coro:PsiR2}).

Alongside the separating property of orthocomplementations singled out in Section~\ref{sec:ortholatt}, in~Sec\-tion~\ref{sec:loclat} we single out separating locality relations (Definition~\ref{def:splitlattice}) on lattices, after which we introduce the more stringent strongly separating locality relations (Definition~\ref{def:strongsplitlattice}). We~then prove the equivalence between orthocomplementations and strongly separating locality lattices (Theorem~\ref{thm:complement_strongly_local}). An easy consequence is the classification of strongly separating locality relations on the lattice $G\big(\R^2\big)$ in Corollary~\ref{cor:topGR2}.

Section~\ref{sect:locvectspaces} is dedicated to our guiding example, the modular bounded lattice $(G(V), \preceq)$. We~show (Proposition~\ref{pp:lvtoll}) that locality relations on the lattice $G(V)$ are in one-one correspondence with locality relations on the vector space $V$ introduced in~\cite{CGPZ1}.
	Specialising to non-degenerate locality relations on a vector space $V$ (Definition~\ref{de:vpproper}), we show that these are in one-one correspondence with strongly separating locality relations on~$G(V)$ (Proposition~\ref{prop:VGV2}). Typical non-degenerate relations are orthogonality relations, and the notion of orthogonal basis generalises to that of locality basis (Definition~\ref{defn:compl_map_TVS}). A Gram--Schmidt type argument is used to show that a strongly \pone \loc relation on $G(V)$ admits a \loc basis (Proposition~\ref{prop:Psisplitting}). The fact that a given basis can be the \loc basis for multiple \loc relations suggests the richness of locality relations on a vector space.

	Following a referee's suggestion, in an appendix, we show a correspondence between another class of locality relations and of orthocomplementations. For complete atomistic lattices, we present a one-one correspondence between a class of locality lattice relations and certain locality relations on the set of atoms of the lattice, which applies to the subspace lattice considered in this paper.

We have therefore reached our goal in extending the correspondence (\ref{eq:correspQ}) well beyond loca\-lity relations of the type $\perp^Q$
of (\ref{eq:introQW}), showing the more general correspondence (\ref{eq:corresp}).
This way, we could detect strongly
\pone \loc relations on vector spaces beyond the ortho\-go\-na\-lity \loc which classifies strongly \pone \loc relations on $\R^2$, among which lies the orthogonality \loc relations corresponding to
(\ref{eq:introQW}).

\section {Modular lattices} \label{section:lattices}
{\samepage
This section puts together notions and examples on lattices to provide background
for our later study of lattices in a locality setup. For references on this background material, see, e.g.,~\cite{B,FF, Gra2,Gra1,HP,K,ML,MM,PR}.

}

\subsection{Lattices}
%\label{s:distr}
For completeness, let us first recall that a {\it lattice} is a partially ordered set (poset) $(L,\leq)$, any two-element subset $ \{a, b\}$ of which has a
least upper bound (also called a join) $a\vee b$, and a~greatest lower bound (also called a meet) $a\wedge b$ such that
\begin{enumerate}\itemsep=0pt
\item[$(a)$] both operations are associative and monotone with respect to the order,
\item[$(b)$] if $a_1\leq b_1$ and $a_2\leq b_2$, then $a_1\wedge a_2\leq b_1\wedge b_2$ and $a_1\vee a_2\leq b_1\vee b_2$.
\end{enumerate}
We shall sometimes write $(L,\leq,\wedge, \vee)$ for completeness.

A {\it morphism} $\varphi\colon L\to L'$ of two lattices $(L,\leq,\wedge, \vee)$ and $(L',\leq',\wedge', \vee')$ is a morphism $\varphi\colon (L,\leq )$ $\to (L', \vee')$ of posets compatible with the operations
\begin{gather*}
\varphi(a)\wedge'\varphi(b)= \varphi(a\wedge b)
\end{gather*}
and
\begin{gather*}
\varphi(a)\vee'\varphi(b)= \varphi(a\vee b) \qquad \forall\, (a, b)\in L^2.
\end{gather*}

\begin{Example} \label{ex:powerset} %\label{ex:gcdlcm}
A first example of lattices is the power set ${\mathcal P}(X)$ of a set $X$ with inclusion as the partial order. Then $\vee$ is the union and $\wedge$ is the intersection. Another elementary example is $\N$ with the partial order $a\vert b\Longleftrightarrow \exists\, k\in \N$, $b=a k$. Then $\vee$ is the least common multiple and~$\wedge$~is the greatest common divisor.
\end{Example}

Here are central examples for our purposes.
\begin{Example}\label{ex:GV}\qquad
\begin{enumerate}\itemsep=0pt
\item[$(a)$] %\label{it:sp}
 Given a finite dimensional vector space $V$, let $G(V)$ denote the set of linear subspaces of~$V$ equipped with
 the partial order ``to be a linear subspace of'' denoted by $\preceq$. The lattice $\left(G(V), \preceq\right)$, that we call the {\it \subsp lattice} comes equipped with
 the sum $\vee=+$ and the intersection $\wedge=\cap $ as lattice operations and we write $(G(V), \preceq, \cap, +)$.
\item[$(b)$]
Given a Hilbert space $V$, let $\cG(V)$ denote the set of {\it closed} linear subspaces of $V$ equipped with
the partial order ``to be a closed linear subspace of" denoted by $\preceq$. Since the sum of two closed linear subspaces of $V$ is not necessarily closed, the topological sum $W\tsum W'$ of two closed linear subspaces $W$ and $W'$ of $V$ is the topological closure $\overline{W+W'}$ of their algebraic sum. So an element $v$ lies in the topological sum of $W $ and $W'$ in $G(V)$ if it can be written as the limit $v=\underset{n\to \infty}{\lim}(w_n+w_n')$ of a sum of elements $w_n\in W$, $w_n'\in W'$. If $W$ and $W'$ are finite dimensional, their topological sum coincides with their algebraic sum.
The lattice $\left(\cG(V), \preceq\right)$ with the topological sum $\vee:=\tsum$ and the intersection $\wedge:=\cap $ is a~lattice and we write $(\cG(V), \preceq, \cap, \tsum)$ and call the {\it closed \subsp lattice}. \looseness=1
%\label{it:tsp}
\end{enumerate}
The study of such lattices was motivated by quantum mechanics, see, e.g.,~\cite{CL,EGL} and the references therein. The mathematical interpretation of quantum theory is indeed based on the structure of the set $\overline G(V)$ of a given Hilbert space $V$, or equivalently of projection operators, viewed as events and on the probabilistic interpretation of Hermitean operators, viewed as observables,
see, e.g.,~\cite[Section~16]{K}.
\end{Example}

\begin{Remark}
With the applications to renormalization in mind, we will later focus on \subsp lattices of finite
dimensional vector spaces though many results have counterparts for
sets of~closed linear subspaces of topological vector spaces. We~hope to investigate topological aspects systematically in forthcoming work.
\end{Remark}

\begin{Definition} \label{de:latticenotion}\qquad\samepage
\begin{enumerate}\itemsep=0pt
\item[$(a)$]
A {\it poset ideal} of a poset $(P,\leq)$ is a subset $M$ of $P$ such that if $a\in M$ and $b\leq a$, then $b\in M$. 	
\item[$(b)$] The smallest ideal of a poset $(P,\leq)$ that contains a given element $a$ is called the {\it principal $($poset$)$ ideal} of $(P,\leq)$ generated by $a$. It is given by
	\begin{gather*}%\label{eq:La}
\downarrow a:=\{b\in P\,|\, b\leq a\}.
	\end{gather*}
\item[$(c)$]
	A {\it sublattice} of a lattice $(L,\leq, \wedge,\vee)$ is
a subset $M$ of $ L$ closed under the meet and join operations in~$M$, i.e., such that for every pair of elements $a, b$ in $M$ both $a \wedge b$ and $a\vee b$ are in $M$.
\item[$(d)$]
An {\it ideal} of a lattice $(L, \leq,\wedge,\vee)$ or {\it lattice ideal} is a poset ideal and a sublattice, that is, a~poset ideal such that $a\vee b$ lies in $I$ for any $(a, b)\in I^2$.
\item[$(e)$]
The smallest ideal of a lattice $(L, \leq,\wedge,\vee)$ that contains a given element $a$ is called the {\it principal $($lattice$)$ ideal} generated by $a$. It coincides with the principal poset ideal $\downarrow a=\{b\in L\,|\, b\leq a\}=\{a\wedge b\,|\, b\in L \}$.
\item[$(f)$]
Any {\it interval}
\begin{gather*}
[a, b]:=\{c\in L\,|\, a\leq c\leq b \}
\end{gather*} in a lattice $L$ is itself a lattice.
\item[$(g)$]
	A lattice $(L,\leq,\wedge, \vee)$ is {\it bounded} from above (resp.~from below)) if it has a {\it greatest element} (also called top element)
	denoted by $1$ (resp.~a {\it least element} (also called bottom) denoted by $0$), which satisfies $ x\leq 1$ (resp.~$0\leq x$) for any $x\in L$.
	If it is bounded both from below and from above in which case $L=[0, 1]$, we call it {\it bounded} and use the short hand notation $(L, \leq, 0, 1)$. We~will typically consider bounded lattices.
\end{enumerate}
\end{Definition}

\subsection{Distributivity and modularity}
 Let $L$ be a lattice. For $a, b, c\in L$, we have
 \begin{gather*}%\label{eq:partdist}
 (a\wedge b)\vee (a\wedge c)\leq a\wedge (b\vee c) \qquad {\rm and}\qquad a\vee (b\wedge c)\leq (a\vee b)\wedge(a\vee c),
 \end{gather*}
 yet the operations $\vee$ and $\wedge$ are not necessarily distributive with respect to each other.

\begin{propdefn}[{\cite[Section 4, Lemma 10]{Gra2}}] \label{propdef:distrib1}
A lattice $L$ is called distributive if it fulfills one of the two equivalent properties
 \begin{gather}\label{eq:distr1}
 a\wedge (b\vee c)= (a\wedge b)\vee (a\wedge c), \qquad \forall\, a,b, c\in L,
 \end{gather}
 or the dual identity:
 \begin{gather*}%\label{eq:distr2}
 a\vee (b\wedge c)= (a\vee b)\wedge(a\vee c), \qquad \forall\, a,b, c\in L.
 \end{gather*}
 \end{propdefn}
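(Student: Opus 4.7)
The plan is to show that the two identities are logically equivalent by proving each implies the other. By the inherent lattice-theoretic duality (interchanging $\wedge\leftrightarrow\vee$ and reversing the order), the two implications are formally dual, so it suffices to carry out one direction in detail; the other follows by the same manipulation with the roles of $\wedge$ and $\vee$ swapped.

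Concretely, assuming \eqref{eq:distr1}, I would expand $(a\vee b)\wedge(a\vee c)$ by applying \eqref{eq:distr1} with first argument $a\vee c$ and second pair $(a,b)$ (or equivalently with the roles rearranged via commutativity of $\wedge$ and $\vee$, which hold in any lattice). This gives
\begin{gather*}
(a\vee b)\wedge(a\vee c) = \bigl((a\vee c)\wedge a\bigr)\vee\bigl((a\vee c)\wedge b\bigr).
\end{gather*}
The first summand collapses to $a$ by the absorption law $a\wedge(a\vee c)=a$, which holds in every lattice. For the second summand, I apply \eqref{eq:distr1} once more to get $(a\vee c)\wedge b = (a\wedge b)\vee (c\wedge b)$. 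Substituting back yields
\begin{gather*}
(a\vee b)\wedge(a\vee c) = a\vee (a\wedge b)\vee (b\wedge c) = a\vee (b\wedge c),
\end{gather*}
where in the final step I again invoke absorption, namely $a\vee(a\wedge b)=a$. This establishes the second identity.

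For the converse, I would copy the same manipulations verbatim but with every $\wedge$ and $\vee$ interchanged, relying on the dual absorption laws. Since no real obstacle arises, the ``hard'' part is essentially bookkeeping: one must make sure to use commutativity of $\wedge$ and $\vee$ as needed so that \eqref{eq:distr1} is applied in the correct orientation, and to invoke absorption at the right moments. A cleaner alternative would be to verify both absorption identities $a\wedge(a\vee x)=a$ and $a\vee(a\wedge x)=a$ up front as lemmas, and then present the above chain of equalities as a single display. Because nothing in the argument depends on boundedness, completeness, or any additional structure, the equivalence holds for arbitrary lattices, matching the formulation of the proposition-definition.
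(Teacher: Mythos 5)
Your proof is correct and is the standard argument: one application of the assumed distributive identity to expand $(a\vee b)\wedge(a\vee c)$, absorption to collapse $(a\vee c)\wedge a$ to $a$, a second application to the remaining term, and absorption again; the duality reduction you invoke for the converse is legitimate because the two identities are each other's order-duals. The paper itself supplies no proof, simply citing Gr\"atzer, and the cited proof is essentially the one you give, so there is nothing to reconcile.
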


 \begin{Example} The power set lattice and the lattice of positive integers in Example~\ref{ex:powerset} are distributive.
 \end{Example}

\begin{coex} \label{coex:GR2} The \subsp lattice $G(V)$ introduced in Example~\ref{ex:GV} is not distributive. In~$G\big(\R^2\big)$, setting $W_1= \langle e_1\rangle$, $W_2= \langle e_2\rangle$ and $W=\langle e_1+e_2\rangle$, we have
	$W=(W_1+ W_2)\cap W\neq (W_1\cap W)+ (W_2\cap W)=\{0\}$ and $W=(W_1 \cap W_2)+ W\neq (W_1+ W)\cap (W_2+ W)=\R^2$.
\end{coex}

The following examples provide simple nondistributive lattices.

\newpage

\begin{Example}[{\cite{Gra1}}]\label{ex:diamond}
	The {\it diamond lattice} $M_3=\{0,a,b ,c,1\}$ with $a$, $b$, $c$ pairwise incomparable is not distributive since $(a\wedge b)\vee c= c$ whereas
$(a\vee c)\wedge (b\vee c)= 1$.
\begin{figure}[h!]\centering
\begin{tikzpicture}[scale=.5]
\node (one) at (0,2) {$1$};
\node (a) at (-2,0) {$a$};
\node (b) at (0,0) {$b$};
\node (c) at (2,0) {$c$};
\node (zero) at (0,-2) {$0$};
\draw (zero) -- (a) -- (one) -- (b) -- (zero) -- (c) -- (one);
\end{tikzpicture}
\caption{The Hasse diagram of the diamond lattice.}
\end{figure}
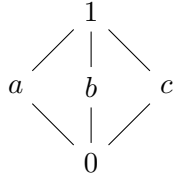
\end{Example}

\begin{Example}\label{ex:pentagon}
 The {\it pentagon lattice} $N_5:=\{0,b_1,b_2,c,1\}$ with partial order defined by
$0\leq b_1< b_2\leq1$ and $0\leq c\leq 1$ with $b_i$ and $c$ incomparable, is not distributive since $b_2\wedge (b_1\vee c)=b_2\wedge 1=b_2$ while $(b_2\wedge b_1)\vee(b_2\wedge c)=b_1\vee 0=b_1$.
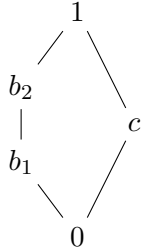
\begin{figure}[h!]\centering
\begin{tikzpicture}[scale=.5]
\node (one) at (0,2) {$1$};
\node (a) at (-1.5,0) {$b_2$};
\node (b) at (-1.5,-2) {$b_1$};
\node (c) at (1.5,-1) {$c$};
\node (zero) at (0,-4) {$0$};
\draw (zero) -- (c) -- (one) -- (a) -- (b) -- (zero);
\end{tikzpicture}
\caption{The Hasse diagram of the pentagon lattice.}
\end{figure}
\end{Example}

These two examples turn out to be the cores of any non-distributive lattices.

\begin{Proposition}[{\cite[Chapter~IX, Theorem~2]{B}, \cite[Theorems~101 and 102]{Gra1}}]\label{prop:charnondist}
A lattice is distributive if and only if it does not contain a pentagon or a diamond as a sublattice.
\end{Proposition}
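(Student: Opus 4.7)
\emph{Proof plan.} The easy direction is immediate: the distributive identity \eqref{eq:distr1} is a universally quantified equation, so it descends to sublattices. Hence if $L$ contained a copy of $M_3$ or $N_5$ as a sublattice, the triples exhibited in Examples~\ref{ex:diamond} and~\ref{ex:pentagon} would furnish witnesses to the failure of \eqref{eq:distr1} in $L$ itself. For the converse I would assume $L$ is non-distributive and dichotomise on whether $L$ is modular in the sense of Definition~\ref{defn:modularlattice}. A preliminary step is the companion characterisation that \emph{$L$ is modular if and only if it does not contain $N_5$ as a sublattice}, which the Case~1 argument below in fact establishes.

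\textbf{Case 1: $L$ is not modular.} Then there exist $x\leq z$ and $y\in L$ with $x\vee(y\wedge z)<(x\vee y)\wedge z$. Setting
\begin{gather*}
0:=y\wedge z,\qquad 1:=x\vee y,\qquad c:=y,\qquad b_1:=x\vee(y\wedge z),\qquad b_2:=(x\vee y)\wedge z,
\end{gather*}
a short check based on absorption, monotonicity, and the assumption $x\leq z$ shows that these five elements are pairwise distinct, closed under $\wedge$ and $\vee$, and arranged in the Hasse pattern of $N_5$ as drawn in Example~\ref{ex:pentagon}.

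\textbf{Case 2: $L$ is modular but non-distributive.} Pick $a,b,c$ witnessing the failure of \eqref{eq:distr1} and define the median elements
\begin{gather*}
u:=(a\wedge b)\vee(b\wedge c)\vee(a\wedge c), \qquad v:=(a\vee b)\wedge(b\vee c)\wedge(a\vee c),
\end{gather*}
together with $a':=(a\wedge v)\vee u$, $b':=(b\wedge v)\vee u$, $c':=(c\wedge v)\vee u$. The goal is to show that $\{u,a',b',c',v\}$ is a sublattice isomorphic to $M_3$: one verifies, using the modular law in its form $x\leq z\Rightarrow x\vee(y\wedge z)=(x\vee y)\wedge z$, that the three pairwise meets of $a',b',c'$ all equal $u$, the three pairwise joins all equal $v$, and finally that $u<v$ with the three intermediate elements pairwise incomparable. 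The strict inequality $u<v$ is exactly what is guaranteed by the failure of \eqref{eq:distr1} on $(a,b,c)$.

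\textbf{Main obstacle.} Case~2 carries the weight of the proof: the six identities $a'\wedge b'=u$, $a'\vee b'=v$ (and their cyclic variants), together with $u<v$, require a careful sequence of modular-law manipulations and are not mechanical. The intended tactic is to first record two standard consequences of modularity --- the absorption identity above and the interval isomorphism $[a\wedge b,a]\cong[b,a\vee b]$ given by $x\mapsto x\vee b$ --- and then to derive each needed equation symbolically, exploiting the $S_3$-symmetry of the construction in $(a,b,c)$ to reduce the number of independent verifications.
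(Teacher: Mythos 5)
The paper does not prove this proposition; it is quoted from the literature (Birkhoff, Chapter~IX, Theorem~2, and Gr\"atzer, Theorems~101 and~102), so there is no in-paper argument to compare against. Your plan is precisely the classical proof from those references: the equational/hereditary argument for the easy direction, the pentagon $\{y\wedge z,\, x\vee(y\wedge z),\, (x\vee y)\wedge z,\, y,\, x\vee y\}$ extracted from a failure of modularity, and the median construction $u$, $v$, $a'$, $b'$, $c'$ yielding a diamond in the modular non-distributive case; the outline is correct, with the (correctly flagged) remaining work being the routine but non-trivial modular-law verifications in Case~2.
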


Here is a useful characterisation of distributivity.
\begin{Proposition}[{\cite[Theorem~I.10]{B}, \cite[Corollary~3.1.3]{PR}}]%\label{prop:distrcanc}
A lattice $L$ is distributive if and only if the following {\it cancellation law} holds:
\begin{gather*}%\label{eq:canclaw}
({\it cancellation\ law})\qquad a\wedge c= b\wedge c\qquad {\rm and}\qquad a\vee c= b\vee c \Longleftrightarrow a=b.
\end{gather*}
 \end{Proposition}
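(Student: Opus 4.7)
The plan is to prove both directions separately, using absorption and distributivity for the forward implication, and the characterisation of distributivity in Proposition~\ref{prop:charnondist} for the converse.

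\textbf{Forward direction.} Assume $L$ is distributive and that $a\wedge c=b\wedge c$ together with $a\vee c=b\vee c$. I would chain absorption with distributivity as follows:
\begin{gather*}
a = a\wedge(a\vee c) = a\wedge(b\vee c) = (a\wedge b)\vee(a\wedge c) = (a\wedge b)\vee(b\wedge c) \\
 = b\wedge(a\vee c) = b\wedge(b\vee c) = b,
\end{gather*}
where the first and last equalities are absorption, the second and second-to-last use the hypothesis on joins, the middle equality uses the hypothesis on meets, and the two distributivity steps use~\eqref{eq:distr1} applied respectively to the triples $(a,b,c)$ and $(b,a,c)$ (together with commutativity of $\wedge$). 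The converse implication $a=b\Rightarrow (a\wedge c=b\wedge c\text{ and } a\vee c=b\vee c)$ is trivial.

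\textbf{Converse direction.} Assume the cancellation law holds in $L$. I would invoke Proposition~\ref{prop:charnondist}, according to which it suffices to show that $L$ contains neither a diamond $M_3$ nor a pentagon $N_5$ as a sublattice. In the pentagon $N_5=\{0,b_1,b_2,c,1\}$ of Example~\ref{ex:pentagon}, one has $b_1\vee c=1=b_2\vee c$ and $b_1\wedge c=0=b_2\wedge c$ while $b_1\neq b_2$, so cancellation already fails within any copy of $N_5$ sitting inside $L$. Similarly, in the diamond $M_3=\{0,a,b,c,1\}$ of Example~\ref{ex:diamond}, the pairwise incomparable elements satisfy $a\vee b=1=a\vee c$ and $a\wedge b=0=a\wedge c$ while $b\neq c$, so cancellation fails. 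Hence neither $N_5$ nor $M_3$ embeds as a sublattice of $L$, and $L$ is distributive.

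\textbf{Main obstacle.} There is no serious obstacle: the forward direction is a bookkeeping exercise in absorption and distributivity, and the converse reduces immediately to the observation that cancellation is visibly violated in the two forbidden sublattices. The only subtle point to emphasise in the write-up is that in the converse step, one must note that a copy of $M_3$ or $N_5$ as a \emph{sublattice} of $L$ inherits the meet and join from $L$, so the failure of cancellation inside $M_3$ or $N_5$ is automatically a failure of cancellation in $L$ itself.
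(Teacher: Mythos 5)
Your proof is correct. Note that the paper does not prove this proposition at all: it is quoted directly from the literature (Birkhoff, Theorem~I.10; Padmanabhan--Rudeanu, Corollary~3.1.3), so there is no in-paper argument to compare against. Your two steps are the standard ones from those sources: the absorption--distributivity chain $a = a\wedge(a\vee c) = \cdots = b$ for the forward direction is exactly the classical computation, and reducing the converse to Proposition~\ref{prop:charnondist} by exhibiting the failure of cancellation inside $M_3$ (with the triple $(b,c,a)$) and inside $N_5$ (with the triple $(b_1,b_2,c)$) is the usual route; your closing remark that a sublattice inherits $\wedge$ and $\vee$ from $L$ is precisely the point that makes this reduction legitimate, and there is no circularity since Proposition~\ref{prop:charnondist} is established independently of the cancellation law.
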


We recall the following definition, which can be viewed as a relative distributivity.

\begin{Definition}[{\cite[Section 4, Lemma 12]{Gra2}}] \label{defn:modularlattice}
A lattice $L$ is called {\it modular} if
\begin{gather*}
({\it modularity})\qquad 	a\geq c \Rightarrow (a\wedge b)\vee c= a \wedge (b\vee c).
%\label{eq:modlat}
\end{gather*}
\end{Definition}

\begin{Example}
A distributive lattice is modular since modularity is a special case of the distributivity in~\eqref{eq:distr1}. In~particular,
$({\mathcal P}(X), \subseteq, \cap, \cup)$ and $(\N, \vert, \wedge, \vee)$ are modular.
	\end{Example}

Not every modular lattice is distributive.

\begin{Example}
The diamond lattice of Example~\ref{ex:diamond} is modular and not distributive.
\end{Example}
	
	The following well-known example will be crucial for our applications:
	\begin{Example}\label{ex:GVmodularity}
		For any finite dimensional vector space $V$, the \subsp lattice $G(V)$ introduced in Example~\ref{ex:GV} is modular, yet it is not distributive when $\dim V>1$ (see
		Counterexample~\ref{coex:GR2}).
	\end{Example}
\begin{coex}
The pentagon lattice of Example~\ref{ex:pentagon} is not modular.
	\end{coex}

Modularity is a hereditary property, which leads to a refinement of Proposition~\ref{prop:charnondist}.

\begin{Proposition}[{\cite[Theorems~101 and 102]{Gra1}}]%\label{pp:sublatticemod}
\qquad
\begin{enumerate}\itemsep=0pt
\item[$(a)$]
A lattice is modular if and only if it does not contain a pentagon sublattice.
\item[$(b)$]
A modular lattice is distributive if and only if it does not contain a diamond sublattice.
\end{enumerate}
\end{Proposition}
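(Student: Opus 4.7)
The plan is to prove both parts by the classical contrapositive strategy: in each case one direction is a direct consequence of the failure of the property in the model lattice $N_5$ (resp.\ $M_3$), so the substance of the argument lies in building the forbidden sublattice from a witness of the failure of modularity (resp.\ distributivity inside a modular lattice).

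For part $(a)$, the ``only if'' direction is immediate: if $L$ contains a sublattice isomorphic to $N_5=\{0,b_1,b_2,c,1\}$, then Example~\ref{ex:pentagon} has already displayed witnesses $b_1\leq b_2$ with $b_2\wedge(b_1\vee c)\neq(b_2\wedge b_1)\vee c$, so modularity fails in $L$. For the converse, I would assume $L$ is not modular and pick $a\geq c$ and $b\in L$ with $p:=(a\wedge b)\vee c\;<\;q:=a\wedge(b\vee c)$ (the inequality $\leq$ is automatic). The candidate pentagon is
\begin{gather*}
\{\,a\wedge b,\; p,\; q,\; b,\; b\vee c\,\}.
\end{gather*}
The key verifications are: $b$ is incomparable with both $p$ and $q$ (if $b\leq q$ then $b\leq a$, forcing $p=q$; if $q\leq b$ then $q\leq a\wedge b\leq p$, again forcing $p=q$); and the meets/joins of $b$ with $p,q$ collapse to $a\wedge b$ and $b\vee c$ respectively. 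For the latter, one checks $b\wedge q=a\wedge b$ using $b\leq b\vee c$, then sandwiches $b\wedge p$ between $a\wedge b$ and $b\wedge q$ to get $b\wedge p=a\wedge b$; dually $b\vee p=b\vee q=b\vee c$. Together with $p<q$ this exhibits the $N_5$ sublattice.

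For part $(b)$, the ``only if'' direction again follows from the fact that $M_3$ itself violates distributivity (Example~\ref{ex:diamond}), so a distributive lattice cannot contain $M_3$ as a sublattice. For the converse, assume $L$ is modular but not distributive and fix $a,b,c\in L$ with $(a\wedge b)\vee(a\wedge c)\;<\;a\wedge(b\vee c)$. Following the classical symmetrisation trick, I set
\begin{gather*}
d:=(a\wedge b)\vee(b\wedge c)\vee(a\wedge c),\qquad
e:=(a\vee b)\wedge(b\vee c)\wedge(a\vee c),
\end{gather*}
and then define $a':=(a\wedge e)\vee d$ and, symmetrically, $b',c'$. The claim is that $\{d,a',b',c',e\}$ is a sublattice isomorphic to $M_3$ with bottom $d$, top $e$, and pairwise incomparable atoms $a',b',c'$. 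The pairwise joins $a'\vee b'$, $a'\vee c'$, $b'\vee c'$ all equal $e$ and the pairwise meets all equal $d$; these identities are where modularity is used repeatedly (to push joins through meets whenever one of the arguments lies below). Strict inequalities $d<a',b',c'<e$ follow from the assumed non-distributivity, so the sublattice is genuinely a diamond.

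The main obstacle in both parts is the second (existence) direction, and within it the verification that the chosen five elements actually form a sublattice with the required Hasse diagram. In $(a)$ the subtle point is checking incomparability of $b$ with $p$ and $q$ and avoiding the degenerate cases $c\leq b$ or $b\leq a$ (which are easily ruled out since they force $p=q$, contradicting the choice). In $(b)$ the delicate step is the symmetric computation of meets and joins among $a',b',c'$: one must carefully apply modularity—in the form $x\geq z\Rightarrow(x\wedge y)\vee z=x\wedge(y\vee z)$—several times, using the fact that $d\leq a\wedge e,\,b\wedge e,\,c\wedge e$ to create the required inequalities that unlock modularity at each step.
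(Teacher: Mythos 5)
The paper does not prove this statement at all: it is quoted as a classical result with a citation to Gr\"atzer's \emph{Lattice theory: foundation} (Theorems~101 and~102), so there is no internal proof to compare against. Your argument is the standard Dedekind--Birkhoff $N_5$/$M_3$ characterisation, and it is the right one. Part $(a)$ is essentially complete: the chain $a\wedge b\leq p<q\leq b\vee c$ together with the verifications $b\wedge p=b\wedge q=a\wedge b$, $b\vee p=b\vee q=b\vee c$, and the incomparability of $b$ with $p,q$ does exhibit a pentagon, and your degenerate-case checks are exactly the ones needed. Part $(b)$ is a correct outline of the classical symmetrisation, but the substance is only asserted: the identities $a'\wedge b'=d$ and $a'\vee b'=e$ (and their cyclic analogues) each require two or three careful applications of the modular law, and the strictness $d<e$ does not follow directly from ``non-distributivity'' but from the auxiliary computations $a\wedge e=a\wedge(b\vee c)$ and $a\wedge d=(a\wedge b)\vee(a\wedge c)$ (both proved using modularity), which show that $d=e$ would force the chosen triple to satisfy the distributive law. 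To make part $(b)$ a proof rather than a plan, those computations must be written out; once they are, the pairwise distinctness of $d,a',b',c',e$ follows formally from the meet/join identities as you indicate.
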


We have the following relationship between modularity and the cancellation law.
\begin{Proposition}[{\cite[Corollary~2.1.1]{PR}}]
	%\label{prop:modcanc}
A lattice $L$ is modular if and only if it obeys the following {\it modular cancellation law}:
\begin{gather*}
\text{for any } (a, b, c)\in L^3, \quad\ \text{if } a\leq b,\, a\wedge c=b\wedge c\quad \text{and}\quad a\vee c=b\vee c,\quad\ \text{then } a=b.
	%\label{eq:modcanc}
	\end{gather*}
\end{Proposition}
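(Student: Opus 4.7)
The plan is to prove the two directions of the equivalence separately, with both hinging on a judicious choice of the auxiliary element appearing in the cancellation hypotheses.

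For the forward implication, I will assume $L$ is modular and suppose $a\le b$, $a\wedge c=b\wedge c$, $a\vee c=b\vee c$. The aim is to obtain $b\le a$, so that with the given $a\le b$ we conclude $a=b$. I would apply the modularity identity with the inequality $b\ge a$ in place of ``$a\ge c$'', yielding
\begin{gather*}
(b\wedge c)\vee a = b\wedge(c\vee a).
\end{gather*}
The left-hand side simplifies via the hypothesis $b\wedge c=a\wedge c$ to $(a\wedge c)\vee a=a$, while the right-hand side simplifies via $a\vee c=b\vee c$ to $b\wedge(b\vee c)=b$. Comparing the two gives $a=b$.

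For the reverse implication, I assume the modular cancellation law and want to derive modularity. Given $a\ge c$, set
\begin{gather*}
x:=(a\wedge b)\vee c,\qquad y:=a\wedge(b\vee c),
\end{gather*}
so that the inequality $x\le y$ always holds in any lattice (using $a\wedge b\le a$, $c\le a$ for $x\le a$, and $a\wedge b\le b\vee c$, $c\le b\vee c$ for $x\le b\vee c$). My plan is to verify the two hypotheses of the cancellation law with the auxiliary element $b$. Namely, I compute $x\wedge b=y\wedge b=a\wedge b$ using only absorption and the monotone inequality $x\le y$: the equality $y\wedge b=a\wedge b$ follows from $b\le b\vee c$, and $a\wedge b\le x\wedge b\le y\wedge b$ pinches $x\wedge b$ to the same value. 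Similarly, $x\vee b=b\vee c$ by absorption ($a\wedge b\le b$), and $y\vee b=b\vee c$ since $c\le a\wedge(b\vee c)\le y$ (using $c\le a$), combined with $y\vee b\le b\vee c$.

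Having established $x\le y$, $x\wedge b=y\wedge b$ and $x\vee b=y\vee b$, the modular cancellation law (with $a\mapsto x$, $b\mapsto y$, $c\mapsto b$) yields $x=y$, which is exactly modularity. The main conceptual point—and the only slightly delicate step—is the choice of $b$ as the auxiliary element in the cancellation law; the routine verifications on either side of the equality are absorption identities together with the already available inequality $x\le y$, so no obstacle beyond bookkeeping is expected.
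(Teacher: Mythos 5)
Your proof is correct. The paper itself gives no argument for this statement---it is simply quoted from \cite[Corollary~2.1.1]{PR}---so there is nothing to compare against; your two directions (absorption plus the modular identity applied to the pair $b\geq a$ for the forward implication, and the cancellation law applied to $x=(a\wedge b)\vee c\leq y=a\wedge(b\vee c)$ with auxiliary element $b$ for the converse) constitute the standard textbook proof, and all the absorption and pinching steps check out.
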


To sum up we have the following correspondences:
\begin{gather*}
\begin{split}
\xymatrix{
	\text{distributivity} \ar@2{<->}[rr] \ar@2{->}[d] &&
	\text{cancellation law} \ar@2{->}[d]\\
		\text{modularity} \ar@2{<->}[rr] &&
	\text{modular cancellation law}
}
\end{split}
%\label{diag:distcanc}
\end{gather*}

\section{\Loc relations on lattices}\label{sec:loclat}

To study properties in lattices equipped with \loc relations, we first equip posets with \loc relations.

As in~\cite{CGPZ2}, we call {\it \loc relation} on a set $X$, any symmetric binary relation $\top$ on $X$ and the pair $(X, \top)$ a {\it locality set}.
For $A\subseteq X$, we call
\begin{gather}\label{eq:polar}
A^\top:=\{x'\in X\,|\, x\top x',\ \forall\, x\in A\}
\end{gather}
the {\it polar set of $A$}.

We observe that for any element $a$ in a locality set $(X, \top)$ we have $a\in \big(a^\top\big)^\top$.

\begin{propdefn} \label{de:locposet}
	A {\it \loc poset} is a poset $(P, \leq)$ equipped with a \loc relation on the set $P$ that satisfies one of the following equivalent conditions which amount to a~{\it compa\-ti\-bi\-lity condition} with the partial order
	\begin{enumerate}\itemsep=0pt
\item[$(a)$] %\label{de:poset_i}
if $a\leq b$, then $b^\top\subseteq a^\top, $ 	

\item[$(b)$] %\label{de:poset_ii}
for any $c\in P$, the set $c^\top$ is a {\it poset ideal} of $(P,\leq)$.
\item[$(c)$] %\label{de:poset_iii}
$\downarrow a\subseteq \big(a^\top\big)^\top $, i.e., if $b\leq a$ then $b\in \big(a^\top\big)^\top $.
	\end{enumerate}

Then the relation $\top$ is called a {\it poset \loc relation}.
\end{propdefn}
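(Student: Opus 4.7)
The plan is to establish the three-way equivalence by two independent cycles $(a)\Leftrightarrow(b)$ and $(a)\Leftrightarrow(c)$. All four implications are short and rely only on unfolding the definition of the polar set in \eqref{eq:polar} together with the symmetry of $\top$; no special properties of the order beyond what is used implicitly in the phrase ``poset ideal'' are needed.

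For $(a)\Rightarrow(b)$, I fix $c\in P$ and check that $c^\top$ is closed under taking smaller elements. If $x\in c^\top$ and $y\leq x$, then $(a)$ applied to $y\leq x$ gives $x^\top\subseteq y^\top$. By symmetry of $\top$, $x\in c^\top$ means $c\in x^\top$, hence $c\in y^\top$, and symmetry once more yields $y\in c^\top$. For the converse $(b)\Rightarrow(a)$, take $a\leq b$ and any $x\in b^\top$; by symmetry $b\in x^\top$, and since $(b)$ makes $x^\top$ a poset ideal, the inequality $a\leq b$ forces $a\in x^\top$, i.e.\ $x\in a^\top$. Thus $b^\top\subseteq a^\top$.

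For $(a)\Rightarrow(c)$, suppose $b\leq a$. Then $(a)$ yields $a^\top\subseteq b^\top$, so every $x\in a^\top$ satisfies $b\,\top\, x$, which is exactly the statement $b\in (a^\top)^\top$; hence $\downarrow a\subseteq (a^\top)^\top$. For $(c)\Rightarrow(a)$, assume $a\leq b$, so that $a\in\,\downarrow b\subseteq (b^\top)^\top$ by $(c)$. Unfolding, for every $x\in b^\top$ we have $a\,\top\, x$, i.e.\ $x\in a^\top$, which gives $b^\top\subseteq a^\top$.

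There is no genuine obstacle: the content is a purely formal manipulation of polar sets. The only points where I must be careful are to apply $(a)$ in the correct direction (a smaller element has a \emph{larger} polar set) and to invoke symmetry of $\top$ at the right moments when translating between ``$x\in A^\top$'' and ``$a\in x^\top$ for all $a\in A$''. Once those bookkeeping issues are handled, each of the four implications is a one-line verification.
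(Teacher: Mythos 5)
Your verification is correct: all four implications are sound, and the symmetry of $\top$ is invoked exactly where it is needed to pass between $x\in c^\top$ and $c\in x^\top$. The paper itself dismisses this as "an easy exercise" and gives no argument, so your write-up is simply the expected routine check, organised in the natural way via $(a)\Leftrightarrow(b)$ and $(a)\Leftrightarrow(c)$.
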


Checking the equivalences $(a)\Leftrightarrow (b)\Leftrightarrow (c)$ is an easy exercise.

\begin{Remark}\qquad
\begin{enumerate}\itemsep=0pt
\item[$(a)$] A locality poset %\zb {we named it "locality poset" already}
amounts to a weak degenerate orthogonal poset in the sense of \cite[Defi\-nition 2.1]{CM} (see also \cite[supplementary Remark~6 in~Section~16]{K}, which refers to~\cite{Sz1}). There, the set ${\rm Ker} \top:=a^\top\cap \{a\}=\{a\in L, \, a\top a\}$ is called the kernel of $\top$.
\item[$(b)$] If $P$ has a bottom element $0$ (resp.~top element $1$), then $p^\top\subseteq 0^\top $ (resp.~$p^\top\supseteq 1^\top $) for any $p\in P$. We~will later consider lattices for which $0^\top=P$ (resp.~$P^\top=\{0\}$).
\end{enumerate}
\end{Remark}

{\samepage\begin{Example} \label{ex:locposetex}
Some locality posets % \zb {we named it "locality poset" already}
are
\begin{enumerate}\itemsep=0pt
\item[$(a)$] the power set $\left({\mathcal P}(X), \subseteq \right)$ of Example~\ref{ex:powerset} endowed with the \loc relation
	\begin{gather*}
	A\top_\cap B\qquad\text{if and only if}\qquad A\cap B=\varnothing,
	\end{gather*}
\item[$(b)$] the \subsp poset $(G(V), \preceq)$ of Example~\ref{ex:GV}
	endowed with the \loc relation
	\begin{gather*}
	W_1\top_\cap W_2\qquad\text{if and only if}\qquad W_1\cap W_2=\{0\}.
	\end{gather*}
\end{enumerate}
	\end{Example}

}

Just as in Proposition-Definition~\ref{de:locposet}, we required from a locality relation $\top$ on a poset, that the polar sets be poset ideals, from a locality relation on a lattice we require that polar sets be lattice ideals.

\begin{Definition} \label{def:loclattice} A {\it \loc relation on a lattice} $(L\leq,\wedge,\vee)$ is a \loc relation $\top$ on the set $L$ such that, for any $a\in L$, the polar set $a^\top$ (defined by~\eqref{eq:polar}) is a lattice ideal of $L$.

	We call the
quintuple $(L, \leq, \top, \wedge, \vee) $ a {\it \loc lattice}.
\end{Definition}
\begin{Remark} %\label{Remark:intersection_localities}
 It is easy to see that the intersection of poset ideals is a poset ideal. Similarly, the intersection of lattice ideals are lattice ideals. Therefore the intersection of locality relations on a lattice is still a locality relation on the same lattice.
 \end{Remark}

Since a lattice ideal is a poset ideal, a \loc lattice is a \loc poset.

Note that the operations $\vee$ and $\wedge$ are defined on the whole cartesian product $L\times L$. \begin{Remark}A related notion is a partial lattice defined in~\cite[Section~I5.4]{Gra1}.
	\end{Remark}

\begin{Example} \label{ex:GVlocality} Given a Hilbert space $(V, \langle \cdot, \cdot, \rangle)$, the closed \subsp lattice $\big(\cG(V),\preceq,\tsum,\cap\big)$ introduced in Example~\ref{ex:GV} is a \loc lattice for the \loc relation: $U_1\top U_2$ if $\langle u_1, u_2\rangle=0$, $\forall\, u_i\in U_i$, $i=1,2$.

To show this, let $A_1$, $A_2$, $B$ be closed linear subspaces of $(V, \langle\cdot, \cdot\rangle)$ with $A_i\top B$ for $i\in\{1,2\}$. The fact that $(A_1\cap A_2) \top B$ is
straightforward. To show the relation $(A_1\tsum A_2) \top B$, for any $a\in A_1\tsum A_2$ and $b\in B$ we write $a=\underset{n\to \infty}{\lim}(a_1(n)+a_2(n))$ and $b=\underset{n\to \infty}{\lim} b(n)$ with $a_i(n)\in A_i$ and $b(n)\in B$ for any $n\in \N$.
By the bilinearity and continuity of the inner product, we have
\begin{gather*}
\langle a,b\rangle = \underset{m,n\to \infty}{\lim}\langle a_1(m)+a_2(m),b(n)\rangle = \sum_{i=1}^2\underset{m,n\to \infty}{\lim}\langle a_i(m),b(n)\rangle=0.
\end{gather*}
Therefore $(A_1\tsum A_2) \top B$ and $(A_1\tsum A_2) \top (B_1\tsum B_2)$ for $A_1$, $A_2$, $B_1$, $B_2$ with $A_i\top B_j$ for $i,j\in\{1,2\}$ as required by symmetry of $\top$. Finally, $\cG(V)$ has a least element, the trivial vector space $\{0\}$, and a greatest
element, the full vector space $V$. Trivially, we have $\{0\}\top G(V)$ and, for any $V\in \cG(V)$, if $A\top V$ then $ A=\{0\}$.
\end{Example}
In particular,
\begin{Example} %\label{ex:GVfdlocality}
The subspace lattice $(G(V), \preceq, +, \cap)$ on a finite dimensional Euclidean real (resp.~Hermitian complex) vector space $(V, \langle \cdot, \cdot\rangle)$ comes equipped with a lattice locality $U_1\top U_2$ if $\langle u_1, u_2\rangle=0$ $\forall\, u_i\in U_i$, $i=1,2$.
	\end{Example}

\begin{Proposition}\label{prop:locvee} A lattice $(L, \wedge, \vee, \top)$ equipped with a poset \loc relation is a \loc lattice if and only if for any finite index set $I$ and $a_i\in L$, $i\in I$, we have
	\begin{gather}\label{eq:locvee}
	\bigg(\bigvee_{i\in I} a_i\bigg)^{\top } =\bigcap _{i\in I} a_i^{\top}.
	\end{gather}
\end{Proposition}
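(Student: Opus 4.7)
The plan is to establish the equivalence by a direct unpacking of the definition of lattice ideal (a poset ideal closed under finite joins) combined with the symmetry of $\top$. Since we are already given a poset locality relation, each polar $c^\top$ is known to be a poset ideal by Proposition-Definition~\ref{de:locposet}; what must be pinned down is the interplay between the lattice-ideal hypothesis and the displayed identity~\eqref{eq:locvee}.

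For the forward implication, I would fix a finite family $\{a_i\}_{i\in I}$ in $L$ and prove the two inclusions of~\eqref{eq:locvee} separately. The inclusion $\left(\bigvee_{i\in I} a_i\right)^\top \subseteq \bigcap_{i\in I} a_i^\top$ is immediate from condition~$(a)$ of Proposition-Definition~\ref{de:locposet} applied to $a_j \leq \bigvee_{i\in I} a_i$ for each $j\in I$, by intersecting over $j$. For the reverse inclusion, if $b \in \bigcap_{i\in I} a_i^\top$ then, by symmetry of $\top$, $a_i \in b^\top$ for every $i$; since $(L,\top)$ is assumed to be a locality lattice, $b^\top$ is a lattice ideal and in particular closed under finite joins, which yields $\bigvee_{i\in I} a_i \in b^\top$, i.e.\ $b \in \left(\bigvee_{i\in I} a_i\right)^\top$.

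Conversely, assuming~\eqref{eq:locvee}, I would show that each polar $c^\top$ is closed under binary joins, which by induction is equivalent to closure under all finite non-empty joins and thus upgrades the poset ideal $c^\top$ to a lattice ideal. Given $x, y \in c^\top$, the symmetry of $\top$ places $c$ in $x^\top \cap y^\top$, which by~\eqref{eq:locvee} applied to $I = \{1,2\}$ equals $(x \vee y)^\top$; symmetry again gives $x \vee y \in c^\top$, as required.

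I do not expect a genuine obstacle: the argument is entirely formal, pivoting on the symmetry of $\top$ to shuttle between the statements $x \in y^\top$ and $y \in x^\top$, together with the order-compatibility already built into the poset locality hypothesis. The only subtlety worth flagging is the scope of~\eqref{eq:locvee}: it is to be read for non-empty finite $I$, since the empty case would amount to the separate condition $0^\top = L$ on the bottom element (alluded to in the remark following Proposition-Definition~\ref{de:locposet}).
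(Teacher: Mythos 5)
Your proposal is correct and follows essentially the same route as the paper's own proof: the left-to-right inclusion from order-compatibility, the right-to-left inclusion by shuttling $b\in\bigcap_i a_i^\top$ to $a_i\in b^\top$ via symmetry and invoking closure of the lattice ideal $b^\top$ under finite joins, and the converse by applying \eqref{eq:locvee} with $|I|=2$ to show each polar is join-closed. Your closing remark on restricting to non-empty $I$ is a sensible clarification but does not alter the argument.
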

\begin{proof} If a poset locality relation $\top$ satisfies
		\eqref{eq:locvee}. Then from $a\top c, b\top c$, we have $c\in a^\top \cap b^\top = (a\vee b)^\top$. Hence $a\vee b$ is in $c^\top$. Thus $c^\top$ is a lattice ideal.
		
Conversely, given a \loc relation on $L$, the compatibility of the \loc relation with the partial order gives the inclusion from left to right. To show the inclusion from right to left, let~$b\in a_i^\top$ for all $i\in I$. Then $a_i\in b^\top$ for all $i\in I$ and since $b^\top$ is a lattice ideal, this implies by~induction on $i$ that $\bigvee_{i\in I} a_i \in b^\top$ so that $b\in 	 \left(\bigvee_{i\in I} a_i\right)^{\top }$.
\end{proof}

On the subspace lattice $G(V)$, Proposition~\ref{prop:locvee} translates to the following statement.
\begin{Example} %\label{pp:loccomp}
		Lattice locality relations on $G(V)$ are poset locality relations $\top_G$ such that for any index set $I$ and $W_i\in G(V)$, $i\in I$, we have
	\begin{gather*}%\label{eq:locsum}
	\bigg(\sum_{i\in I} W_i\bigg)^{\top _G} =\bigcap_{i\in I} W_i^{\top _G}.
	\end{gather*}
	Note that sum and intersection in the above equation are operations in the lattice $G(V)$.
\end{Example}

\section{Orthocomplemented lattices}\label{sec:ortholatt}
This section reviews the classical notion of orthcomplemented lattices, in preparation for the forthcoming sections in which we shall equip a subclass of orthocomplemented lattices with a~\loc relation.
See~\cite[Section~II.6 ]{B} and~\cite[Section~I.6]{Gra1} for background on complemented lattices and relatively complemented lattices. We~also refer the reader to~\cite{CM} for a study of~ortho\-complementations.

\subsection{Relatively complemented lattices}

We specialise to lattices {$(L, \leq, \wedge, \vee)$} bounded from below by $0$, equipped with the disjointedness \loc relation $\top_\wedge\colon a\top_\wedge b$ if and only if $a\wedge b=0$
so that $\wedge$ restricted to the graph of $\top_\wedge$ is identically zero.
Define
	\begin{gather*}%\label{eq:diamondloc} \notag
	a\oplus b:=a \vee b \qquad \text{when}\, {a\top_\wedge}\,b.
\end{gather*}

\begin{Definition}\label{defn:relcomplt}\qquad
\begin{enumerate}\itemsep=0pt
\item[$(a)$] A {\it complemented lattice} is a bounded lattice $(L,\leq, 0, 1)$ in which every element $a$ has a~{\it complement}, i.e., an element $a'$ in $ L$, such that $a \oplus a' = 1$.
\item[$(b)$]
A {\it sectionally complemented } lattice 	
% (as suggested by Referee 1)}
is a lattice $(L,\leq, 0)$ with bottom element $0$ in which any interval of the form $[0, b]$ is complemented when viewed as a sublattice of $L$, i.e., such that for any $a\le b$ there is an element $a^\prime$ in $ L$ such that $a \oplus a^\prime = b$.
\item[$(c)$] A {\it relatively complemented lattice} is a lattice $(L,\leq)$ in which any interval $[a, b]$ of $L$ is complemented when viewed as a sublattice of $L$. This means that any element $x\in [a, b]$ has a {\it relative complement}, namely an element $x'$ in $ L$ such that $x \vee x' = b$ and $x\wedge x'=a$.
\end{enumerate}
\end{Definition}

\begin{Remark}
Any relatively complemented lattice which has a minimal element $0$ is sectionally complemented and any sectionally complemented lattice with a greatest element is complemented. Therefore, any relatively complemented lattice with a least and a greatest element is complemented.
\end{Remark}

\begin{Example} 	The power set $\left({\mathcal P}(X), \subseteq \right)$ considered in Example~\ref{ex:powerset} is relatively complemented. Fix an interval $[A,B]$ given by $A\subseteq B$ and let $C\in [A,B]$, that is, $A\subseteq C \subseteq B$. Let $C':=A\cup (B\setminus C)$. Then $C\cup C'=B$ and $C\cap C'=A$.
	\end{Example}
We recall a well-known result:

\begin{Lemma}[{\cite[Corollary to Theorem 11.1]{Ru}}] %\label{lem:compl_mod_rel_compl}
	 Every complemented modular lattice $L$ is relatively complemented.
\end{Lemma}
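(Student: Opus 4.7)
The plan is to use the complementation available in $L$ (applied to $x$, regarded as an element of the whole lattice) and then cut it down to the interval $[a,b]$ by a combination of meets and joins, relying on modularity to show that the construction really has the required meet and join with $x$.

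More concretely, fix an interval $[a,b]$ with $a\le x\le b$. Because $L$ is complemented, choose $x^{*}\in L$ with $x\wedge x^{*}=0$ and $x\vee x^{*}=1$. The candidate for a relative complement of $x$ in $[a,b]$ is
\begin{gather*}
y:=(x^{*}\wedge b)\vee a.
\end{gather*}
First I would check that $y\in[a,b]$: the inequality $a\le y$ is immediate from the definition, and $y\le b$ follows since $x^{*}\wedge b\le b$ and $a\le b$.

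The two essential computations both invoke modularity (Definition~\ref{defn:modularlattice}). Using $a\le x$, modularity gives
\begin{gather*}
x\wedge y = x\wedge\bigl((x^{*}\wedge b)\vee a\bigr) = \bigl(x\wedge(x^{*}\wedge b)\bigr)\vee a = \bigl((x\wedge x^{*})\wedge b\bigr)\vee a = (0\wedge b)\vee a = a.
\end{gather*}
Using $x\le b$, modularity gives
\begin{gather*}
x\vee y = x\vee\bigl((x^{*}\wedge b)\vee a\bigr) = x\vee(x^{*}\wedge b) = (x\vee x^{*})\wedge b = 1\wedge b = b,
\end{gather*}
where at the first step we absorbed $a$ into $x$. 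Hence $y$ is a relative complement of $x$ in $[a,b]$, and since $x\in[a,b]$ was arbitrary, every interval of $L$ is complemented, i.e., $L$ is relatively complemented.

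The main obstacle is simply the choice of the formula for $y$: one has to pick an expression for which modularity is available in both directions (once with $a\le x$ to compute the meet, once with $x\le b$ to compute the join), so that the two ``trimming'' operations $\wedge b$ and $\vee a$ can be slipped past $x$ without distributivity. Everything else is a straightforward verification using the given hypothesis that $x^{*}$ is a complement of $x$ in the ambient lattice.
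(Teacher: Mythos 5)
Your proof is correct: both applications of the modular law are legitimate (the first with $x\ge a$, the second with $b\ge x$), and $y=(x^{*}\wedge b)\vee a$ is indeed a relative complement of $x$ in $[a,b]$. The paper itself gives no proof, citing Rutherford instead, and your argument is the standard one found there.
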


\begin{Example}\label{GVrelcomplemented}
The \subsp lattice $(G(V), \preceq, \cap, +)$ for a finite dimensional vector space considered in Example~\ref{ex:GV} is relatively complemented.
As can be easily proved, for $U\subseteq W$ in $G(V)$, the interval $[U, W]$ is isomorphic to the subspace lattice of $W/U$ under the isotone assignment $X\in [U,W]$ to $X/U\in G(W/U)$. Then the existence of relative complements in $[U,W]$ follows from the existence of complements in $G(W/U)$.
\end{Example}

\begin{Example}%\label{GinfVrelcomplemented}
The closed \subsp lattice $\big(\cG(V), \preceq, \cap, +\big)$ for a Hilbert space $V$ considered in Example~\ref{ex:GV} is also relatively complemented. Indeed, for a closed subspace $U$ of $V$, a closed subspace $W$ of $U$ has a closed complement in $U$ given by its orthogonal complement space $W^\perp_U:=W^\perp\cap U$. Indeed, the latter is closed and their topological sum $W\tsum W^\perp_U=(W\oplus W^\perp)\cap U=U$.
\end{Example}

	\begin{coex}%\label{ex:divisioncompl}
The lattice $(\N, \vert)$ considered in Example~\ref{ex:powerset} is not sectionally	complemented, for example, $2\vert 4$, but there is no element $a$, such that $2\oplus a=4$.	
\end{coex}
The following example shows that the complement in a complemented lattice might not be unique.

\begin{Example}%\label{ex:GVcompl}
Back to Example~\ref{GVrelcomplemented} in the case $V=\R^2$,
and with the notations of Counter\-example~\ref{coex:GR2}, we have $W\oplus W_1= V$ and $W\oplus W_2= V$.
\end{Example}
The following lemma shows that the uniqueness of the relative complement is a strong requi\-rement.

\begin{Lemma}[{\cite[Corollary 1, p.~134]{B}}] %\label{lem:distrrelcomp}
On a lattice $(L, \leq)$, the uniqueness of the relative complement is equivalent to the distributivity property.
\end{Lemma}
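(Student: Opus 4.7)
The plan is to prove both implications by combining results already established in Section~\ref{section:lattices}, namely the cancellation law characterization of distributivity and Proposition~\ref{prop:charnondist} on the structure of non-distributive lattices.

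For the direction from distributivity to uniqueness, I would proceed as follows. Suppose an element $x$ admits two relative complements $x_1$ and $x_2$ in some interval $[a,b]$, so that $x\wedge x_i=a$ and $x\vee x_i=b$ for $i=1,2$. The cancellation law recorded earlier in the paper, which is equivalent to distributivity, then immediately forces $x_1=x_2$. Alternatively, one can give a direct computation from~\eqref{eq:distr1}:
\begin{gather*}
x_1 = x_1\wedge b = x_1\wedge(x\vee x_2) = (x_1\wedge x)\vee(x_1\wedge x_2) = a\vee(x_1\wedge x_2),
\end{gather*}
and the symmetric computation yields $x_2 = a\vee(x_1\wedge x_2)$, so $x_1=x_2$.

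For the converse, I would argue by contraposition. Assume $L$ is not distributive; by Proposition~\ref{prop:charnondist}, $L$ contains either the diamond $M_3$ or the pentagon $N_5$ as a sublattice. Since a sublattice inherits the meet and join of the ambient lattice, any identity of the form $u\wedge v=w$ or $u\vee v=w$ holding inside the sublattice also holds in $L$. In the diamond sublattice $\{0,a,b,c,1\}$ of Example~\ref{ex:diamond}, the element $a$ admits both $b$ and $c$ as relative complements in the interval $[0,1]$, because $a\wedge b=a\wedge c=0$ and $a\vee b=a\vee c=1$. In the pentagon sublattice $\{0,b_1,b_2,c,1\}$ of Example~\ref{ex:pentagon}, the element $c$ admits both $b_1$ and $b_2$ as relative complements in $[0,1]$, because $b_i\wedge c=0$ and $b_i\vee c=1$ for $i=1,2$. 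In either case, uniqueness of the relative complement fails, completing the contraposition.

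There is no major technical obstacle in this argument; the only subtlety worth spelling out is the observation that the meet and join in a sublattice coincide with those computed in the ambient lattice, so that the pairs of relative complements exhibited inside $M_3$ or $N_5$ are genuine relative complements in $L$.
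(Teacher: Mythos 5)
Your argument is correct. Note that the paper does not prove this lemma at all -- it is quoted verbatim from \cite[Corollary~1, p.~134]{B} -- so there is no internal proof to compare against; what you give is the standard textbook argument, assembled from results the paper does record. The forward direction is an immediate instance of the cancellation law (or of your direct computation, which is valid because any relative complement $x_i$ of $x$ in $[a,b]$ satisfies $x\vee x_i=b$, hence $x_i\leq b$ and $x_i\wedge b=x_i$). For the converse you correctly invoke Proposition~\ref{prop:charnondist}: a non-distributive lattice contains $M_3$ or $N_5$ as a sublattice, and in either case you exhibit an element with two distinct relative complements in the interval of $L$ whose endpoints are the extreme elements of that sublattice; your closing remark -- that meets and joins in a sublattice are computed in the ambient lattice, so the exhibited equalities $u\wedge v=0$, $u\vee v=1$ really hold in $L$ -- is precisely the point that makes this work. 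The only observation worth making is that the $M_3$--$N_5$ theorem is a heavier tool than strictly needed for the converse (one could instead derive the cancellation law directly from uniqueness of relative complements), but since the paper states that theorem as Proposition~\ref{prop:charnondist}, your route is legitimate and efficient.
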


\subsection{ Orthocomplemented lattices}
%\label{s:orthomodular}
We introduce the notion of orthocomplementation on a poset bounded from below, which amounts to the (strong) orthocomplementation introduced in \cite[Remark below Proposition~1.7]{CM}.

\begin{Definition} \label{defn:orthocomplementmap}
A poset $(P, \leq, 0)$ with bottom $0$ is called {\it orthocomplemented} if it can be equipped with a~map $\Psi\colon P\to P$ called the {\it orthocomplementation}, which assigns to $a\in P$ its {\it orthocomplement}\footnote{$\Psi(a)$ is often denoted by $a^\perp$ in the literature, a notation we avoid here not to cause any confusion when specialising to the case of orthogonal complements on vector spaces.} $\Psi(a)$ such that
\begin{enumerate}\itemsep=0pt
\item[$(a)$] ({\it $\Psi$ antitone})
 $b\leq a\Rightarrow\Psi(a)\leq \Psi(b)\ \forall\, (a, b)\in P^2$,
\item[$(b)$]
({\it $\Psi$ involutive})
$\Psi^2=\Id$,
\item[$(c)$] ({\it $\Psi$ \pone })
for any $b\in P$, if $b\leq a$ and $b\leq \Psi(a)$, then $b=0$.
\end{enumerate}
\end{Definition}
Here is an elementary yet useful lemma.
For $a, b$ in a poset $(P,\leq)$, let $a\vee b$ (resp.~$a\wedge b$) be the least upper bound (resp.~least lower bound) of $a$ and $b$, if it exists.

\begin{Lemma} %\label{prop:Morgan_laws}
A map $\Psi\colon P\to P$ on a poset $(P, \leq)$ which is
\begin{enumerate}\itemsep=0pt
\item[$(a)$] $(${\it antitone}$)$
			$\forall\, (a, b)\in L^2$, $b\leq a\Rightarrow\Psi(a)\leq \Psi(b)$,
\item[$(b)$]
			$(${\it involutive}$)$
			$\Psi^2=\Id$,
\end{enumerate}
		satisfies the following relations:
		\begin{gather*}%\label{eq:Psiop}
		\Psi(a \wedge b)=\Psi(a)\vee\Psi(b)\qquad {\rm and}\qquad \Psi(a \vee b)=\Psi(a )\wedge\Psi(b) \end{gather*}
for any $(a , b)\in P^2$ for which the joins arising in these identities are well-defined $($see {\rm\cite[Proposition~1.3(iii)]{CM})}.
		
		If moreover, $P$ has a bottom $0$, then $P$ is bounded with top $\Psi(0)$
		$($see~{\rm\cite[Proposition~1.3(iii)]{CM})}.
\end{Lemma}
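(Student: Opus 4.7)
The plan is to prove the two De~Morgan--type identities by showing that each side is characterised by the same universal property in the poset, then derive the bounded statement as a one-line consequence of antitonicity.

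First I would treat the identity $\Psi(a\vee b)=\Psi(a)\wedge\Psi(b)$ under the assumption that $a\vee b$ exists in $P$. Since $a\leq a\vee b$ and $b\leq a\vee b$, antitonicity gives $\Psi(a\vee b)\leq \Psi(a)$ and $\Psi(a\vee b)\leq \Psi(b)$, so $\Psi(a\vee b)$ is a lower bound of $\{\Psi(a),\Psi(b)\}$. To see it is the greatest, let $c\in P$ be any lower bound, i.e.\ $c\leq \Psi(a)$ and $c\leq \Psi(b)$. Applying $\Psi$ and using antitonicity together with the involutivity $\Psi^2=\Id$, we get $a=\Psi(\Psi(a))\leq \Psi(c)$ and similarly $b\leq \Psi(c)$, so $\Psi(c)$ is an upper bound of $\{a,b\}$ and hence $a\vee b\leq \Psi(c)$. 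Applying $\Psi$ once more yields $c=\Psi(\Psi(c))\leq \Psi(a\vee b)$. This shows $\Psi(a\vee b)$ is the meet of $\Psi(a)$ and $\Psi(b)$; in particular the meet on the right exists, and equals $\Psi(a\vee b)$.

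The dual identity $\Psi(a\wedge b)=\Psi(a)\vee\Psi(b)$ is obtained by the symmetric argument, or more cleanly by substituting $\Psi(a),\Psi(b)$ in place of $a,b$ in the identity just proved and applying $\Psi$ to both sides, using $\Psi^2=\Id$. This reduces the second identity to the first without any new calculation, and takes care of the matching existence of the relevant join.

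Finally, for the bounded statement, assume $P$ has a bottom element $0$. For every $p\in P$ we have $0\leq p$, so antitonicity gives $\Psi(p)\leq \Psi(0)$; thus $\Psi(0)$ is an upper bound for all of $P$, i.e.\ a top. (Uniqueness of a top then forces $\Psi(\Psi(0))=0$, which is automatic from involutivity.) The only subtle point in the whole argument is the existence clause in the de~Morgan identities; the two-way use of antitonicity combined with involutivity handles it transparently, so I do not anticipate a genuine obstacle, just care in quantifying over $c$ when verifying the universal property of meet and join.
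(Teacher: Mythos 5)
Your proof is correct. The paper itself offers no argument for this lemma~--- it simply cites \cite[Proposition~1.3(iii)]{CM}~--- so there is nothing internal to compare against; your write-up is exactly the standard Galois-connection argument one would expect that reference to contain. The universal-property verification for $\Psi(a\vee b)=\Psi(a)\wedge\Psi(b)$ is complete and correctly handles the existence clause (you show the meet on the right exists whenever $a\vee b$ does), and the reduction of the dual identity via the substitution $a\mapsto\Psi(a)$, $b\mapsto\Psi(b)$ followed by an application of $\Psi$ is clean and keeps the existence hypotheses matched to the join actually appearing in that identity. The only point worth making explicit in the final step is that $\Psi(p)\leq\Psi(0)$ for all $p$ a priori only bounds the image of $\Psi$; one should add that involutivity makes $\Psi$ surjective (every $q\in P$ equals $\Psi(\Psi(q))$), so $\Psi(0)$ is indeed a top for all of $P$. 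That is a one-clause fix, not a genuine gap.
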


\begin{Definition} \label{defn:orthocomplemented_lattice}	
A lattice $(L, \leq, \wedge, \vee)$ bounded from below by $0$ equipped with a map $\Psi\colon L\to L$ which defines an orthocomplementation on the poset $(L, \leq , 0)$, is called an {\it orthocomplemented lattice}.
\end{Definition}

Thanks to the above lemma, the separating condition in Definition~\ref{defn:orthocomplementmap} can be replaced by an a priori stronger condition.

\begin{Proposition}[{\cite[Propositions 1.1(iii) and~1.7]{CM}}] \label{prop:Psistronglysep}\qquad
\begin{itemize}\itemsep=0pt
	\item An antitone and involutive map $\Psi$ on a lattice $(L, \leq, \wedge, \vee)$ induces morphisms
 $\Psi\colon (L, \wedge)\to (L, \vee)$ and $\Psi\colon (L, \vee)\to (L, \wedge)$ on $L$.
 \item
A lattice $L$ bounded from below equipped with an orthocomplementation $\Psi$ is bounded from above with top $1=\Psi(0)$ and satisfies the following {\it strongly separating} property 	 \begin{gather}\label{eq:Psistronglyseparating}
(\Psi \text{ \it strongly \pone})\qquad a\oplus \Psi(a) =1 \qquad \forall\, a\in L.
\end{gather}
Thus an orthocomplemented lattice enjoys the strongly separating property.
\end{itemize}
\end{Proposition}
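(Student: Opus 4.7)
The plan is to obtain both bullets from the lemma immediately preceding the proposition together with the three defining axioms of an orthocomplementation. Since $L$ is a lattice, all joins $a\vee b$ and meets $a\wedge b$ exist, so the ``whenever well-defined'' caveat in the lemma is vacuous. Consequently, the antitone and involutive map $\Psi$ automatically satisfies $\Psi(a\wedge b)=\Psi(a)\vee\Psi(b)$ and $\Psi(a\vee b)=\Psi(a)\wedge\Psi(b)$ for all $(a,b)\in L^{2}$, which is precisely the statement that $\Psi$ is a morphism $(L,\wedge)\to(L,\vee)$ and $(L,\vee)\to(L,\wedge)$. This establishes the first bullet without further work.

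For the second bullet, I would first show that $L$ is bounded from above with top $1:=\Psi(0)$. For any $a\in L$, the bottom condition gives $0\leq\Psi(a)$; applying the antitone property and then involutivity yields $a=\Psi^{2}(a)\leq\Psi(0)$, so $\Psi(0)$ is an upper bound for every element of $L$, hence the top.

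Finally, to obtain the strongly \pone identity $a\oplus\Psi(a)=1$, I would verify the two constitutive equalities separately. Setting $b:=a\wedge\Psi(a)$, we have $b\leq a$ and $b\leq\Psi(a)$, so the \pone axiom (c) of Definition~\ref{defn:orthocomplementmap} forces $b=0$, which is the disjointedness $a\top_{\wedge}\Psi(a)$ needed to make sense of $a\oplus\Psi(a)$. For the join, I would apply the morphism identity just established to $a\wedge\Psi(a)=0$, obtaining
\begin{equation*}
\Psi(a)\vee a=\Psi(a)\vee\Psi^{2}(a)=\Psi\bigl(a\wedge\Psi(a)\bigr)=\Psi(0)=1,
\end{equation*}
where the last equality is the definition of $1$. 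Combining the two equalities gives $a\oplus\Psi(a)=1$ as desired. No step presents a genuine obstacle; the argument is essentially a bookkeeping exercise that exploits the interplay between the antitone/involutive structure of $\Psi$ (which dualises meets to joins) and the separating axiom (which locates the meet of $a$ and $\Psi(a)$ at the bottom).
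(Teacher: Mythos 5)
Your proof is correct. The paper itself gives no proof of this proposition, deferring to \cite[Propositions 1.1(iii) and~1.7]{CM}, so there is nothing internal to compare against; your argument is the standard one and uses exactly the ingredients the paper sets up (the preceding lemma for the de Morgan identities, the separating axiom for $a\wedge\Psi(a)=0$, and antitonicity plus involutivity for $1=\Psi(0)$ and for converting the meet identity into $a\vee\Psi(a)=1$).
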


Here is a class of examples of orthocomplemented lattices of direct interest to us.
\begin{Example}%\label{ex:orthogonalcomplement}
This is a classical example, see, e.g., \cite[below Definition 1.4]{CM}.
	
	Given a Hilbert vector space $(V, \langle\cdot, \cdot\rangle)$,
	and $\cG(V)$ the closed \subsp lattice of Example~\ref{ex:GV}. The map
	\begin{gather*}%\label{eq:Psiperp}
	\Psi_{\langle\cdot, \cdot\rangle}\colon \ \cG(V) \longrightarrow \cG(V),\qquad
	W\longmapsto W^\perp:=\{v\in V\,|\, \langle v, w\rangle=0,\, \forall\, w\in W\}
	\end{gather*}
	defines an orthocomplementation on $\cG(V)$.
	
	Indeed, $W^\perp$ is closed (by the continuity of the inner product) for any linear (whether closed or not) subspace $W\preceq V$ and for any closed linear subspaces $W$, $W_1$, $W_2$ of $V$, we have
	\begin{enumerate}\itemsep=0pt
	\item[$(a)$] ({\it $\Psi$ \pone}) {$W\oplus W^\perp =\overline W\oplus \overline{W^\perp}=V$},
	\item[$(b)$] ({\it $\Psi$ antitone}) if $W_1\leq W_2$, then $W_2^\perp\preceq W_1^\perp$,
	\item[$(c)$]
		({\it $\Psi$ involutive}) ${W^\perp}^\perp=\overline W=W$.
	\end{enumerate}
\end{Example}
\begin{Corollary}\label{coro:PsiR2}
% Strongly separating \zb {no need of "strongly separating"}
Orthocomplementations on $G\big(\R^2\big)$ are in one-one correspondence with involutive maps $\psi\colon [0, \pi)\to [0, \pi)$ without fixed points.
\end{Corollary}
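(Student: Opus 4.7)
The plan is to exploit the very simple structure of the poset $G(\R^2)$: it has a bottom element $\{0\}$, a top element $\R^2$, and a middle ``antichain'' consisting of all one-dimensional subspaces, which are pairwise incomparable. The one-dimensional subspaces can be canonically identified with $[0,\pi)$ by assigning to each line through the origin the unique angle $\theta \in [0,\pi)$ it makes with the $x$-axis. The correspondence will then be $\Psi \leftrightarrow \psi$, where $\psi$ is the restriction of $\Psi$ to this middle layer, read through the angle parametrisation.

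First I would show that any orthocomplementation $\Psi$ on $G(\R^2)$ sends the middle layer to itself. Since $\Psi$ is an antitone involution and hence an order-reversing bijection, it must swap the unique bottom $\{0\}$ with the unique top $\R^2$. The remaining elements, all lines, form a single antichain, so $\Psi$ necessarily permutes them; no line can be sent to $\{0\}$ or $\R^2$ by involutivity. Hence $\Psi$ restricts to an involution $\psi$ on the set of lines, which via the angle identification yields an involution of $[0,\pi)$. The separating property (Definition~\ref{defn:orthocomplementmap}(c)) applied to a line $L$ forces $L \wedge \Psi(L) = \{0\}$, i.e., $\Psi(L) \neq L$; thus $\psi$ has no fixed points.

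Conversely, given a fixed-point-free involution $\psi\colon[0,\pi)\to[0,\pi)$, define $\Psi$ on $G(\R^2)$ by $\Psi(\{0\}) = \R^2$, $\Psi(\R^2) = \{0\}$, and on lines by transport of $\psi$ through the angle parametrisation. I would then check the three axioms of Definition~\ref{defn:orthocomplementmap}: involutivity is immediate from $\psi^2 = \mathrm{id}$; antitonicity is automatic because the only nontrivial comparabilities in $G(\R^2)$ are $\{0\} \leq L \leq \R^2$ for $L$ a line, and these are respected since $\Psi$ swaps $\{0\}$ and $\R^2$ while preserving the middle layer; and for the separating property one only needs to verify that for a line $L$, $L \wedge \Psi(L) = \{0\}$, which holds precisely because $\psi$ has no fixed points, so $\Psi(L)$ is a line distinct from $L$ and the two distinct lines in $\R^2$ have trivial intersection. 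Finally, the assignments $\Psi \mapsto \psi$ and $\psi \mapsto \Psi$ are clearly mutually inverse, establishing the bijection.

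I do not expect any significant obstacle: the proof is essentially a bookkeeping exercise once one observes that $G(\R^2)$ has trivial order structure above the middle layer. The only minor subtlety is to confirm that the ``separating'' condition of Definition~\ref{defn:orthocomplementmap}(c) on $G(\R^2)$ reduces exactly to fixed-point-freeness on lines (since if $b\leq a$ and $b \leq \Psi(a)$ with $a$ a line, then $b \in \{\{0\}, a\}$, and $b = a$ would force $a \leq \Psi(a)$, hence $a = \Psi(a)$ as both are lines).
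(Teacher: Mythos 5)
Your proposal is correct and follows essentially the same route as the paper: both reduce the problem to the action of $\Psi$ on the antichain of lines parametrised by $[0,\pi)$, after noting that $\Psi$ must swap $\{0\}$ and $\R^2$, and both match involutivity of $\Psi$ with involutivity of $\psi$ and the separating condition with fixed-point-freeness. The only cosmetic difference is that you derive $\Psi(\{0\})=\R^2$ and the absence of fixed points directly from the antitone-involution and separating axioms, whereas the paper invokes the strongly separating property $U\oplus\Psi(U)=\R^2$ from Proposition~\ref{prop:Psistronglysep}; your checks are, if anything, slightly more complete.
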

\begin{proof}
An orthocomplementation $\Psi\colon G\big(\R^2\big)\rightarrow G\big(\R^2\big)$ obeys the strongly separating condition~\eqref{eq:Psistronglyseparating}
\begin{gather*}
U\oplus \Psi(U) =\R^2 \qquad \forall\, U\in G\big(\R^2\big).
\end{gather*}
	In particular, $\Psi (\{0\})=\R^2$ and $\Psi\big(\R^2\big)=\{0\}$. Thus $\Psi$ is uniquely determined by its effect on the lines $\R {\rm e}^{\theta {\rm i}}$ in bijection with $\theta\in [0,\pi)$. Let $\psi\colon [0,\pi)\to [0,\pi)$ be defined by $\Psi(\R {\rm e}^{\theta {\rm i}})={\rm e}^{\psi(\theta){\rm i}}$. Then~$\Psi$ being involutive (resp.~strongly separating) amounts to $\psi$ being involutive (resp.~without fixed points). The antitonicity of $\Psi$ is trivial.
\end{proof}

\section{\Pone \loc relations and orthocomplementations}
%\label {s:locality}
This section is dedicated to a class of \loc relations on bounded lattices from which we build orthocomplementations. We~establish a one-one correspondence
\begin{gather*}
\{\text{orthocomplementations}\} \quad \longleftrightarrow\quad \{\text{strongly \pone \loc relations}\}
\end{gather*}
for bounded lattices. As we learned from referee reports on a previous version of this paper, this one-one correspondence is actually known in lattice theory (see, e.g.,~\cite{CM}). We~nevertheless use a~slightly different terminology which we find well-suited for the locality setup we have in mind.

\subsection {\Pone \loc relations}

A finite sublattice $M=\{a_1,\dots,a_N\}$ of a lattice admits a greatest element $a_1\vee\cdots\vee a_N$. In~general a sublattice $M$ of $L$ does not have a greatest element, even if $(L,\leq,\wedge, \vee)$ is bounded.

\begin{coex} %\label{rk:sublattices}
	As a
	counterexample, take $L=\N \cup \{\infty \}$
	with $\leq$ the usual order on natural number and $\infty \ge n,$ for any $n\in \N$.
	Set $n\vee m:=\max(n,m)$ and $n\wedge m:=\min(n,m)$. Then $(\N,\leq,\wedge, \vee)$ is a sublattice of $(L,\leq,\wedge, \vee)$, but does not admit
	a greatest element.
\end{coex}
The following technical lemma will be useful for what follows.
\begin{Lemma} \label{lem:maxtop}
	Let $S $ be a subset of a locality poset $(P,\leq,\top)$ $($Definition~$\ref{de:locposet})$ admitting a~greatest element $\grt(S)$. Then
	\begin{gather}\label{eq:maxtop}
	\grt (S)^\top= S^\top.
	\end{gather}
\end{Lemma}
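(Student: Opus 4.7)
The plan is to prove equality \eqref{eq:maxtop} by double inclusion, both directions being essentially immediate from the definitions together with the compatibility of $\top$ with $\leq$ recorded in Proposition-Definition~\ref{de:locposet}.

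For the inclusion $S^\top\subseteq \grt(S)^\top$, I would simply use that $\grt(S)$ is itself an element of $S$. So any $x\in S^\top$ satisfies $x\top s$ for every $s\in S$, in particular $x\top \grt(S)$, which gives $x\in \grt(S)^\top$. This direction uses only the definition of polar set and does not require the poset compatibility of $\top$.

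For the reverse inclusion $\grt(S)^\top\subseteq S^\top$, I would invoke condition~(a) of Proposition-Definition~\ref{de:locposet}: since $s\leq \grt(S)$ for every $s\in S$, the compatibility condition yields $\grt(S)^\top\subseteq s^\top$ for each such $s$. Consequently any $x\in \grt(S)^\top$ lies in every $s^\top$, i.e.\ satisfies $x\top s$ for all $s\in S$, so $x\in S^\top$.

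There is no real obstacle here; the only subtlety worth flagging is that the inclusion $\grt(S)^\top\subseteq S^\top$ is the substantive one, as it is precisely where the poset-locality compatibility axiom enters, whereas the opposite inclusion is a tautology. The lemma therefore encapsulates, in a single identity, the fact that the polar set of a bounded family in a locality poset is governed entirely by its top element.
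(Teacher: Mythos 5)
Your proof is correct and follows essentially the same route as the paper's: the inclusion $S^\top\subseteq \grt(S)^\top$ is immediate since $\grt(S)\in S$, and the reverse inclusion follows from the antitonicity condition~(a) of Proposition-Definition~\ref{de:locposet} applied to $s\leq\grt(S)$ for each $s\in S$. Your added remark correctly identifies which direction actually uses the locality-poset axiom.
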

\begin{proof}
Clearly, $S^\top \subseteq \grt (S)^\top$. On the other hand, for any $a\in S$, from $a\leq \grt (S)$ we obtain $\grt (S)^\top \subseteq a^\top$. Hence $\grt (S)^\top \subseteq \cap_{a\in S} a^\top =S^\top$.
\end{proof}

The subsequent definition relates to that of
	weak and strong orthogonality introduced in~\cite[Definition 2.2]{CM}. To compare our separating property with weak orthogonality, the following elementary lemma is useful.

\begin{Lemma}\label{lem:nondegseparating}
In a \loc lattice $(L, \leq, \top, 0, 1)$, the {\it non-degeneracy condition} \begin{gather*}%\label{eq:atopa}
a\top a\Longrightarrow a=0\qquad \forall\, a\in L
\end{gather*}
$($written $\ker \top =0$ in {\rm\cite{CM}}$)$ is equivalent to the condition
\begin{gather*}%\label{eq:atopab}
a\top b\Longrightarrow a\wedge b=0\qquad \forall\, (a, b)\in L^2.
\end{gather*}
\end{Lemma}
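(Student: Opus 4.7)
The statement is an equivalence, so the plan is to prove both implications separately, and the reverse direction is essentially immediate: assuming $a\top b \Rightarrow a\wedge b=0$ for all pairs, an element $a$ with $a\top a$ must satisfy $a = a\wedge a = 0$. So the work lies in the forward implication.

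For the forward direction, assume the non-degeneracy condition $a\top a \Rightarrow a=0$ and suppose $a\top b$. The key is to exploit the fact that on a locality lattice (Definition~\ref{def:loclattice}) the polar set $b^\top$ is a lattice ideal, hence in particular a poset ideal in the sense of Proposition-Definition~\ref{de:locposet}. Since $a\top b$ gives $a\in b^\top$ and $a\wedge b\leq a$, the poset ideal property yields $a\wedge b\in b^\top$, i.e., $(a\wedge b)\top b$. Using symmetry of $\top$ and then the same reasoning with $(a\wedge b)^\top$ in place of $b^\top$, from $b\in (a\wedge b)^\top$ and $a\wedge b\leq b$ we conclude $a\wedge b\in (a\wedge b)^\top$, that is, $(a\wedge b)\top(a\wedge b)$. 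Non-degeneracy then forces $a\wedge b=0$, as desired.

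There is no real obstacle here; the only thing to be a little careful about is applying the poset-ideal property twice (once for $b^\top$, once for $(a\wedge b)^\top$) and invoking symmetry of $\top$ in between. Note that the join/ideal part of being a lattice ideal (closure under $\vee$) is not even used — only the poset-ideal part of Proposition-Definition~\ref{de:locposet}(b), so the lemma would already hold for locality posets bounded below. This is worth pointing out in the write-up because it clarifies which axiom is doing the work, and it connects with Remark \ref{de:locposet} about the kernel $\ker\top = \{a\in L : a\top a\}$.
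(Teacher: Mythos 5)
Your proof is correct and follows essentially the same route as the paper's: the reverse direction by taking $b=a$, and the forward direction by deducing $(a\wedge b)\top(a\wedge b)$ from $a\top b$ via the downward-closedness of polar sets and symmetry, then invoking non-degeneracy. You merely spell out the two applications of the poset-ideal property that the paper leaves implicit, and your observation that only the poset-ideal part of the hypothesis is used is accurate but does not change the argument.
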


\begin{proof}
Taking $b=a$ in the second condition gives the non-degeneracy condition. Conversely, from $a\top b$ we have $(a\wedge b)\top (a\wedge b)$. Then the non-degeneracy condition gives $a\wedge b=0$.
\end{proof}

By Proposition-Definition~\ref{de:locposet}, on a poset with locality $(P, \leq)$ we have $\downarrow a\subseteq \big(a^\top\big)^\top $. Imposing the converse inclusion leads to the following notion.

\begin{Definition}\label{def:splitlattice}
	A \loc relation $\top$ on a lattice $(L, \leq, 0, 1) $
	is called {\it \pone} if the following conditions hold.
	\begin{enumerate}\itemsep=0pt
		\item[$(a)$] %\label{de:proper_ii}
	 $a\top b\Rightarrow a\wedge b=0$ for any $a$ and $b$ in $L$,
		\item[$(b)$] the set $a^\top$ admits a greatest element $\grt\big(a^\top\big)$ for any $a$ in $L$.
	\end{enumerate}
In this case, we say that $(L, \leq 0, 1, \top) $ is a {\it \ptwob \loc lattice}.
\end{Definition}

\begin{Remark}
It follows from Lemma~\ref{lem:nondegseparating}, that a weak (non-degenerate) orthogonality lattice of~\cite[Definition 2.2]{CM} satisfies the separating property.
\end{Remark}

Assumption (b) on the existence of $\grt\big(a^\top\big)$ which corresponds to completeness in \cite[Definition 2.4]{CM} is rather strong.
	\begin{coex}The diamond lattice	$L=\{0,a,b,c,1\}$ of Example~\ref{ex:diamond}, endowed with the \loc relation $x\top y\Leftrightarrow x\wedge y=0$, does not satisfy this assumption, since $a^\top=\{0,b,c\}$, $b^\top=\{0,a,c\}$ and $c^\top=\{0,a,b\}$.
\end{coex}

\begin{Proposition}	\label{lem:full}
	In a separated locality lattice $(L, \leq,\top, 0,1 )$, the following conditions are equi\-valent
\begin{gather} 	
a^\top=\{0\}\Leftrightarrow a=1 \qquad \forall\, a\in L, \label{eq:extreme}
\\
a\oplus \grt\big(a^\top\big)=1\qquad \forall\, a\in L\qquad (\text{\it closedness condition}).
\label{eq:closedness}
\end{gather}
Assuming~\eqref{eq:closedness} holds, we have
\begin{gather} 	
\label{eq:closednessweak}
a^\top=L\Leftrightarrow a=0\qquad \forall\, a\in L.
\end{gather}
\end{Proposition}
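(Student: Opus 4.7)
The plan is to prove the three required implications separately, relying on two recurring facts: since $a^{\top}$ is a lattice ideal with greatest element $\grt(a^{\top})$, it coincides with the principal ideal $\downarrow \grt(a^{\top})$; and by Proposition~\ref{prop:locvee} the polar of a join is the intersection of polars. Throughout, $\grt(a^{\top}) \in a^{\top}$ gives $a \top \grt(a^{\top})$, so separatedness yields $a \wedge \grt(a^{\top}) = 0$ and the sum $a \oplus \grt(a^{\top}) = a \vee \grt(a^{\top})$ is well-defined.

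For $\eqref{eq:closedness} \Rightarrow \eqref{eq:extreme}$, I would specialise the closedness identity at $a = 1$: combining $1 \vee \grt(1^{\top}) = 1$ with $1 \wedge \grt(1^{\top}) = 0$ forces $\grt(1^{\top}) = 0$, so $1^{\top} = \{0\}$. Conversely, $a^{\top} = \{0\}$ implies $\grt(a^{\top}) = 0$, and \eqref{eq:closedness} then gives $a = a \vee 0 = 1$. For $\eqref{eq:closedness} \Rightarrow \eqref{eq:closednessweak}$ the same strategy applies at $a = 0$: one obtains $\grt(0^{\top}) = 1$ and hence $0^{\top} = L$, while conversely $a^{\top} = L$ contains $1$, so $a \top 1$ and the separating property forces $a = a \wedge 1 = 0$.

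The substantive step is $\eqref{eq:extreme} \Rightarrow \eqref{eq:closedness}$. Fix $a \in L$ and set $b := a \vee \grt(a^{\top})$; in view of \eqref{eq:extreme} it suffices to prove $b^{\top} = \{0\}$. Proposition~\ref{prop:locvee} gives $b^{\top} = a^{\top} \cap \grt(a^{\top})^{\top}$. Any $c$ in this intersection satisfies $c \in a^{\top} = \downarrow \grt(a^{\top})$, hence $c \leq \grt(a^{\top})$, while $c \in \grt(a^{\top})^{\top}$ combined with separatedness gives $c \wedge \grt(a^{\top}) = 0$; together, $c = 0$, as desired. This is the main obstacle: the argument succeeds only because separatedness and the principal-ideal description of polars jointly trivialise $a^{\top} \cap \grt(a^{\top})^{\top}$, a step that would be inaccessible without first splitting the polar of the join via Proposition~\ref{prop:locvee}.
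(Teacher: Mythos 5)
Your proof is correct and follows essentially the same route as the paper's: both reduce $\eqref{eq:extreme}\Rightarrow\eqref{eq:closedness}$ to showing $\big(a\vee\grt\big(a^\top\big)\big)^\top=\{0\}$ via the splitting of the polar of the join, and both handle the remaining implications by specialising the closedness identity at $a=1$ and $a=0$ together with the separating property. The only cosmetic difference is that you conclude $c=0$ from $c\leq\grt\big(a^\top\big)$ and $c\wedge\grt\big(a^\top\big)=0$, whereas the paper observes $c\top c$ and applies non-degeneracy directly; these are interchangeable.
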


\begin{proof}
We first prove~\eqref{eq:extreme} $\Longrightarrow$~\eqref{eq:closedness}.
Since $\big(a\oplus \grt \big(a^\top\big)\big)^\top \subseteq a^\top \cap \big(\grt \big(a^\top\big)\big)^\top = a^\top \cap \big(a^\top\big)^\top$, for $c\in (a\oplus \grt (a^\top))^\top$ we have $c\top c$ and hence $c\wedge c=c=0$ by $(a)$ in Definition~\ref{def:splitlattice}. Thus $\big(a\oplus \grt \big(a^\top\big)\big)^\top=\{0\}$ and $a\oplus \grt \big(a^\top\big)=1$ by assumption.

We next prove~\eqref{eq:extreme} $\Longleftarrow$ (\ref{eq:closedness}).
From $a^\top = \{0\}$ we have
$1=a\oplus \grt\big(a^\top\big)=a\oplus 0=a$. From $a=1$, we have
$0=a\wedge \grt \big(a^\top\big) =1\wedge \grt\big(a^\top\big)=\grt\big(a^\top\big)$. Then $a^\top=0$.

To prove (\ref{eq:closedness}) $\Longrightarrow$~\eqref{eq:closednessweak}, let us first notice that the implication $a^\top=L\Rightarrow a=0$ holds as a~consequence of $a\top a$, which in turn implies $a\wedge a=a=0$ by $(a)$ in Definition~\ref{def:splitlattice}. It therefore suffices to show that Assumption~\eqref{eq:closedness} implies $0^\top=L$. This follows from $0\oplus\grt\big(0^\top\big)=1\Rightarrow\grt\big(0^\top\big)=1$. Thus $1\in 0^\top$. Since $0^\top$ is a poset ideal by Definition~\ref{def:loclattice} and since $b\leq 1$ $\forall\, b\in L$, we have $b\in 0^\top $ $\forall\, b\in L$.
\end{proof}

An easy counterexample shows that~\eqref{eq:closednessweak} does not imply~\eqref{eq:closedness}.

\begin{coex}We equip the bounded lattice $L=\{0,a,b,1\}$ defined by the partial order $0\leq a\leq 1$ and $0\leq b\leq 1$ with the locality relation $\top$ defined by $0^\top=L$, $a^\top=b^\top=1^\top$. Then clearly~\eqref{eq:closednessweak} holds but~\eqref{eq:closedness} does not.\end{coex}
\begin{Definition}
A separating locality relation $\top$ on a lattice $L$ is called {\it strongly \pone} if it satisfies
	\begin{itemize}\itemsep=0pt	
\item [$(c)$]%\label{de:proper_iii}
		$\grt\big(\big(a^\top\big)^\top\big)= a $ for any $a\in L$,
\item [$(c')$]%\label{de:proper_iv}
or equivalently $\downarrow a\supseteq \big(a^\top\big)^\top $,
	\end{itemize}in which case the lattice endowed with the \loc relation is called {\it strongly \ptwob}. We~also say that $(L, \leq, 0, 1, \top) $ is a strongly separated locality lattice.
	\label{def:strongsplitlattice}
\end{Definition}
Not all separating lattices are strongly separating as the next example shows.
\begin{Example}
		Let $\R^3$ be equipped with the canonical basis $\{e_1,e_2,e_3\}$. We~consider the locality relation $\top$ on $\R^3$ defined by $0_{\R^3}\top\R^3$, $e_i\top e_{i+1}$ for $i\in \{1,2 \}$ extended by symmetry and linearity. By construction, $\big(\R^3,\top\big)$ is a locality vector space and $\top$ endows $G\big(\R^3\big)$ with a lattice locality structure.
		
In $G\big(\R^3\big)$, the only non trivial polar sets are $0^\top = G\big(\R^3\big)$,
$\langle e_1\rangle^\top = \{0,\langle e_2 \rangle\}$,
$\langle e_2\rangle^\top = \{0,\langle e_1\rangle,\langle e_3\rangle,\langle e_1, e_3\rangle\}$,
$\langle e_3\rangle^\top = \{0,\langle e_2 \rangle\}$,
$\langle e_1, e_3\rangle^\top =\{0, \langle e_2\rangle\}$. In~particularly $\top$ is a separating loca\-lity relation on $G(V)$. However it is not strongly separating since
\begin{gather*}
\grt\big(\big(\langle e_1\rangle^\top\big)^\top\big) = \grt\big(\{0, \langle e_2\rangle\}^\top\big)= \langle e_1, e_3\rangle.
\end{gather*}
\end{Example}

 \begin{Remark}\qquad
 \begin{enumerate}\itemsep=0pt
 		\item[$(a)$] Under the additional assumption $(b)$ of a separating locality, the mere strong separation property $(c)$ actually amounts to the strong orthogonality of \cite[Definition~2.3]{CM}.
 		\item[$(b)$] Note that the existence of $\grt\big(\big(a^\top\big)^\top\big)$ in $(c)$ follows from the existence of $\grt\big(a^\top\big)$ for any $a\in L$. Indeed, we can then apply~\eqref{eq:maxtop} $\grt(S)^{\top}=S^{\top}$ to $S:=a^\top$, which yields $\grt\big(a^\top\big)^\top= \big(a^\top\big)^\top$.
 \end{enumerate}
 \end{Remark}

\begin{Example}
	The \loc relation in Example~\ref{ex:locposetex} is strongly \pone. It is clearly \pone with $\grt\big(A^\top\big)=X\backslash A$.
	Then by Lemma~\ref{lem:maxtop},
	\begin{gather*}
\grt \big(\big(A^\top\big)^\top\big)=\grt\big( \grt\big(A^\top\big)^\top\big)=X\backslash (X\backslash A)=A.
\end{gather*}
\end{Example}
\begin{Example}
The \loc lattice $\big(\cG(V), \preceq, \{0\}, V, \top \big)$ in Example~\ref{ex:GVlocality}, is strongly \ptwob by the same argument, noting that for any $U\in \overline G(V)$, the linear space $\grt\big(U^\top\big)$ is the orthogonal complement of $U$ for the inner product.
\end {Example}

\begin{Corollary} %\label{cor:full}
A strongly separated locality lattice $(L, \leq,\top,0,1)$ satisfies the closedness condition~\eqref{eq:closedness}.
\end{Corollary}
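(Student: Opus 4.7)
The plan is to reduce the claim to condition \eqref{eq:extreme} from Proposition~\ref{lem:full}, which states that in a separated locality lattice the closedness condition \eqref{eq:closedness} is equivalent to the equivalence $a^\top=\{0\}\Leftrightarrow a=1$. So it suffices to establish \eqref{eq:extreme} using the strong separation hypothesis $(c)$ of Definition~\ref{def:strongsplitlattice}.

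For the implication $a=1\Rightarrow a^\top=\{0\}$, I would argue directly from the axioms of a separated locality lattice without needing strong separation: any $b\in 1^\top$ satisfies $1\top b$, hence $b=1\wedge b=0$ by condition $(a)$ of Definition~\ref{def:splitlattice}, so $1^\top\subseteq\{0\}$. Since $\grt\bigl(1^\top\bigr)$ exists by condition $(b)$, the set $1^\top$ is non-empty, and therefore $1^\top=\{0\}$.

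The converse implication $a^\top=\{0\}\Rightarrow a=1$ is where I would use strong separation. Applying condition $(c)$ of Definition~\ref{def:strongsplitlattice} gives
\[
a=\grt\bigl(\bigl(a^\top\bigr)^\top\bigr)=\grt\bigl(\{0\}^\top\bigr)=\grt\bigl(0^\top\bigr).
\]
To evaluate the right-hand side, I would use the previous paragraph together with the symmetry of $\top$: from $1^\top=\{0\}$ we get $0\in 1^\top$, hence $1\in 0^\top$, which forces $\grt\bigl(0^\top\bigr)=1$ and thus $a=1$.

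Having established \eqref{eq:extreme}, Proposition~\ref{lem:full} immediately gives \eqref{eq:closedness}. The only substantive use of the strong-separation axiom is in rewriting $a$ as $\grt\bigl((a^\top)^\top\bigr)$; the remaining work is bookkeeping with polar sets and the symmetry of $\top$. No step looks like a genuine obstacle, as the hypothesis directly produces the identity needed to close the argument.
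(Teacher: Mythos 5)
Your proposal is correct and follows essentially the same route as the paper: both reduce the claim to condition \eqref{eq:extreme} and then invoke Proposition~\ref{lem:full}, proving $1^\top=\{0\}$ from the separating property and deducing $a^\top=\{0\}\Rightarrow a=1$ from axiom $(c)$ of Definition~\ref{def:strongsplitlattice}. The only cosmetic difference is that the paper simply asserts $0^\top=L$ at the last step, whereas you justify $1\in 0^\top$ via the non-emptiness of $1^\top$ and symmetry --- a slightly more careful version of the same argument.
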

\begin{proof} We show that a strongly separated locality lattices satisfies~\eqref{eq:extreme}.
To show the implication from right to left, if $b\in 1^\top$, then $b\top 1$, so $b\top b$ which means $b=0$, so $1^\top =\{0\}$. The converse implication follows from $(c)$ in Definition~\ref{def:strongsplitlattice}. Indeed, since $0^\top=L$, we have $a^\top=\{0\}\Rightarrow \big(a^\top\big)^\top=L$ which implies that $a=\grt L=1$.
\end{proof}

The subsequent example shows that the \pone property is not hereditary.
	\begin{Example} Consider the lattice
	$L=\{0,a,b,1\}$, $0\leq a\leq 1$ and $0\leq b\leq 1$ and no relation between $a$ and $b$. On $L$ endowed with the disjointedness \loc relation $\top:=\top_\wedge$, we have $0^\top=L$, $1^\top =0$, $a^\top=\{0,b\}$ and $b^\top=\{0,a\}$. So $L$ is (strongly) \pone, but not
	its restriction to $\widetilde L=\{0,a,1\}$, since $a^\top =\{0\}$.
\end{Example}

\subsection {One-one correspondence}
	 It turns out to be a specialisation of \cite[Theorem 3.1]{CM}, which relates weak degenerate orthogonalities and weak degenerate orthocomplementations on posets, to a one-one correspondence between strong (non degenerate) orthogonalities and strong (non degenerate) orthocomplementations on bounded lattices. The latter is what we need in view of the applications we have in~mind.

Proposition~\ref{lem:full} provides sufficient conditions on a locality poset for the existence of a candidate orthocomplement to any element $a$, namely $\grt \big(a^\top\big)$ arising in (\ref{eq:closedness}). The subsequent statement confirms and strenghtens this fact.

\begin{Theorem} \label{thm:complement_strongly_local}
Let $(L, \leq, 0, 1)$ be a bounded lattice.
\begin{enumerate}\itemsep=0pt
	\item Let $\top$ be a \loc relation on the set $L$ for which $(L, \leq, 0,1, \top) $ is a lattice with strongly \ptwob \loc. The assignment
	\begin{gather*}%\label{eq:Psitop}
	\Psi:=\Psi^\top\colon\quad L\to L,\qquad a\mapsto \grt \big(a^\top\big)
	\end{gather*}
	is an orthocomplementation.
	%\label{it:split1}
	\item Conversely, given an orthocomplementation $\Psi $ on $L$, the \loc relation $\top:=\top_\Psi$ defi\-ned~by
	\begin{gather*}%\label{eq:PsitoTop}
a\top b\Longleftrightarrow b \leq \Psi(a)
\end{gather*}
	yields a strongly \pone \loc relation.
	%\label{it:split2}
	\item The maps defined this way:
	\begin{gather*}
F\colon\ \{\text{strongly \pone \loc relations on L}\}\to
\{\text{orthocomplementations on L}\}
\end{gather*}
	and
	\begin{gather*}
 G\colon\ \{\text{orthocomplementations on L}\} \to
 \{\text{strongly \pone \loc relations on L}\}
 \end{gather*}
	are inverse to each other.
	%\label{it:split3}
\end{enumerate}
%\label{thm:splitcompl}
\end{Theorem}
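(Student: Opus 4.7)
The plan is to verify the three items sequentially, exploiting the rigid observation that in a strongly separated locality lattice the polar set $a^\top$ is a principal ideal $\downarrow \grt(a^\top)$ (since $\grt(a^\top)\in a^\top$ and $a^\top$ is a lattice ideal by Definition~\ref{def:loclattice}), so that the correspondence between $\grt(a^\top)$ and $\Psi(a)$ becomes tautological once both sides are pinned down.

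For item~1, I would check the three axioms of Definition~\ref{defn:orthocomplementmap} for $\Psi^\top(a):=\grt(a^\top)$. Antitonicity is immediate from Proposition-Definition~\ref{de:locposet}(a): $a\leq b$ yields $b^\top\subseteq a^\top$, hence $\grt(b^\top)\leq \grt(a^\top)$. Involutivity is obtained by applying Lemma~\ref{lem:maxtop} to $S=a^\top$ to get $\grt(a^\top)^\top=(a^\top)^\top$, whence $\Psi^\top(\Psi^\top(a))=\grt((a^\top)^\top)=a$ by Definition~\ref{def:strongsplitlattice}(c). The separating axiom reduces to the observation that $b\leq \Psi^\top(a)=\grt(a^\top)$ together with the poset-ideal property of $a^\top$ forces $b\in a^\top$, i.e.\ $a\top b$; combined with $b\leq a$ and Definition~\ref{def:splitlattice}(a), this gives $b=a\wedge b=0$.

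For item~2, I would first identify the polar sets: setting $a\top_\Psi b \Longleftrightarrow b\leq \Psi(a)$ gives $a^{\top_\Psi}=\downarrow\Psi(a)$, which is visibly a principal lattice ideal. Symmetry of $\top_\Psi$ amounts to the equivalence $b\leq \Psi(a)\Longleftrightarrow a\leq \Psi(b)$, obtained by applying the antitone involution $\Psi$ to both sides. The principal-ideal form then simultaneously certifies the locality-lattice axiom of Definition~\ref{def:loclattice}, yields $\grt(a^{\top_\Psi})=\Psi(a)$, and reduces the strong separation identity (c) to $(a^{\top_\Psi})^{\top_\Psi}=\downarrow\Psi(\Psi(a))=\downarrow a$ with greatest element $a$. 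Condition (a) of Definition~\ref{def:splitlattice} follows from $b\leq \Psi(a)\Rightarrow a\wedge b\leq a\wedge \Psi(a)=0$, the last equality being the separating property of $\Psi$ granted by Proposition~\ref{prop:Psistronglysep}.

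Item~3 is then a matter of direct substitution: $F(G(\Psi))$ sends $a$ to $\grt(\downarrow\Psi(a))=\Psi(a)$, while $G(F(\top))$ produces the relation $b\leq \grt(a^\top)$, which is equivalent to $b\in a^\top$ thanks once more to the lattice-ideal structure of $a^\top$. I expect the main obstacle not to be any single hard step but rather the careful bookkeeping in item~2, where one must confirm that $\downarrow\Psi(a)$ genuinely satisfies the locality-poset compatibility of Proposition-Definition~\ref{de:locposet} before manipulating iterated polars; fortunately this compatibility is automatic because $\downarrow x$ is always a principal lattice ideal in a bounded lattice.
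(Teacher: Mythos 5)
Your proposal is correct and follows essentially the same route as the paper: antitonicity from the order-compatibility of $\top$, involutivity via Lemma~\ref{lem:maxtop} applied to $S=a^\top$ together with condition $(c)$ of Definition~\ref{def:strongsplitlattice}, the identification $a^{\top_\Psi}=\downarrow\Psi(a)$ as a principal lattice ideal driving item~2, and direct substitution for item~3. The only (harmless) local deviation is in the separating axiom of item~1, where you argue directly from the poset-ideal property of $a^\top$ and Definition~\ref{def:splitlattice}$(a)$, whereas the paper routes this through Proposition~\ref{lem:full} and the closedness identity $a\oplus\grt\big(a^\top\big)=1$.
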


\begin{proof}
$(a)$
It follows from Proposition~\ref{lem:full}, that on a lattice $(L, \top)$ with strongly \ptwob \loc, the map $a\mapsto \Psi^\top(a):=\grt (a^\top)$, which is well-defined, satisfies $a\oplus \Psi^\top(a)= 1$. This is~\eqref{eq:Psistronglyseparating} in Proposition~\ref{prop:Psistronglysep}. To show that $\Psi$ is an orthocomplementation, we need to check that it is involutive and isotone. From $a\leq b$ we have $b^\top\subseteq a^\top$ which yields $\grt \big(b^\top\big)\leq\grt \big(a^\top\big)$. So~$\Psi^\top$ is antitone. Further we have
\begin{gather*}
\big(\Psi^\top(a)\big)^\top=\grt\big(a^\top\big)^\top=\big(a^\top\big)^\top
\end{gather*}
as a consequence of~\eqref{eq:maxtop} applied to $S=a^\top$. So
\begin{gather*}
\Psi^\top\big(\Psi^\top(a)\big)=\grt\big(\Psi^\top(a)\big)^\top=\grt \big(a^\top\big)^\top.
\end{gather*}

The strongly separating condition $(c')$ in Definition~\ref{def:strongsplitlattice} tells us that
$\grt\big(\big(a^\top\big)^\top\big)= a$ so that
$\Psi^\top\big(\Psi^\top(a)\big)=a$, which ends the proof of the involutivity.

$(b)$ %\eqref{it:split2}
Let $\Psi\colon L\to L$ be an orthocomplementation. We~show that
\begin{gather*}
a\top b\Longleftrightarrow b\leq \Psi(a)
\end{gather*}
is a strongly \pone \loc relation on $L$ in several steps. Note that $\top$ is equivalently defined by $a^\top:=\downarrow\Psi(a)$ for all $a\in L$.
\begin{enumerate}\itemsep=0pt
	\item[(1)] $\top$ is symmetric since the monotonicity and involutivity of $\Psi$ yield that if $b\leq \Psi(a)$, then $ \Psi(b)\geq \Psi(\Psi(a))=a$.
	\item[(2)] $\top$ equips the poset $(L, \leq)$ with a \loc lattice structure (see Definition~\ref{def:loclattice}) since $a^\top:=\downarrow\Psi(a)$ is a (principal) lattice ideal (see Definition~\ref{de:latticenotion}).
	\item[(3)] We have $0^\top=\downarrow \Psi(0)=\downarrow 1=L$ and $1^\top =\downarrow \Psi(1)=\downarrow 0=\{0\}$.
	\item[(4)] By definition, $a\wedge \Psi (a)=0$. Now if $a\top b$, then $b\le \Psi (a)$, so $a \wedge b=0$. Furthermore, $a^\top = \downarrow \Psi(a)$ means that $\grt\big(a^\top\big)=\Psi(a)$. Hence $(L,\leq, 0, 1,\top)$ is a lattice with separated \loc.
	\item[(5)] Finally,
\begin{gather*}
\grt\big(\big(a^\top\big)^\top\big) = \grt\big( (\downarrow \Psi(a))^\top\big) =\grt\big( \Psi(a)^\top\big) =\grt \big( \downarrow \Psi^2(a)\big) = \grt (\downarrow a)=a.
\end{gather*}
This proves that $\top$ is strongly separating.
\end{enumerate}

$(c)$ For a lattice $L$ strongly \pone \loc relation $\top $, by definition
\begin{gather*}
(a,b)\in G(F(\top))\Leftrightarrow b\le F(\top)(a)\Leftrightarrow b \le \grt \big(a^\top\big)\Leftrightarrow (a,b)\in \top.
 \end{gather*}
So
\begin{gather*}
G(F(\top))=\top.
\end{gather*}
For an orthocomplementation $\Psi $ on $L$,
\begin{gather*}
F(G(\Psi ))(a)=\grt \big(a^{G(\Psi)}\big)=\Psi (a).
\end{gather*}
Hence
\begin{gather*}
F(G(\Psi))=\Psi.\tag*{\qed}
\end{gather*}
\renewcommand{\qed}{}
\end{proof}

Combining Corollary~\ref{coro:PsiR2} with Theorem~\ref{thm:complement_strongly_local} yields the following characterisation of strongly locality relations on $G\big(\R^2\big)$.
	\begin{Corollary}\label{cor:topGR2} Strongly separating locality relations on $\R^2$ are in one-one correspondence with involutive maps $\psi\colon [0, \pi)\to [0, \pi)$ without fixed points.
		\end{Corollary}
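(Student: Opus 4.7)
The plan is to obtain this corollary as an immediate composition of the two one-one correspondences already established. First, I would note that $G(\R^2)$ is a bounded lattice, with bottom element $\{0\}$ and top element $\R^2$, so that Theorem~\ref{thm:complement_strongly_local} applies and yields a bijection
\begin{gather*}
F\colon\ \{\text{strongly separating locality relations on } G(\R^2)\}\longrightarrow \{\text{orthocomplementations on }G(\R^2)\},
\end{gather*}
sending $\top$ to $\Psi^\top(a)=\grt(a^\top)$, with inverse $G$ sending $\Psi$ to the relation $a\top b\Leftrightarrow b\leq\Psi(a)$.

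Next, Corollary~\ref{coro:PsiR2} provides a bijection between orthocomplementations on $G(\R^2)$ and involutive fixed-point-free maps $\psi\colon[0,\pi)\to[0,\pi)$, where $\Psi$ is recovered from $\psi$ via $\Psi(\R e^{\theta i})=\R e^{\psi(\theta)i}$ (together with the forced values $\Psi(\{0\})=\R^2$ and $\Psi(\R^2)=\{0\}$). Composing the two bijections gives the desired correspondence: to a strongly separating locality relation $\top$ on $G(\R^2)$ one associates the unique involutive fixed-point-free map $\psi\colon[0,\pi)\to[0,\pi)$ characterised by
\begin{gather*}
\grt\bigl((\R e^{\theta i})^\top\bigr)=\R e^{\psi(\theta) i}\qquad \forall\,\theta\in[0,\pi).
\end{gather*}

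Since the composition of two bijections is a bijection, the proof is complete; there is no real obstacle, the whole content lies in the previous two results. The only point to make explicit is that $G(\R^2)$ is indeed bounded so that Theorem~\ref{thm:complement_strongly_local} is applicable, and that the description of orthocomplementations in Corollary~\ref{coro:PsiR2} uses precisely the same bounded structure, so the two bijections can be composed without further compatibility checks.
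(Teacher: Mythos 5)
Your proposal is correct and follows exactly the paper's own route: the corollary is stated there as an immediate consequence of composing Theorem~\ref{thm:complement_strongly_local} (applied to the bounded lattice $G\big(\R^2\big)$) with Corollary~\ref{coro:PsiR2}. Your explicit formula $\grt\big(\big(\R {\rm e}^{\theta {\rm i}}\big)^\top\big)=\R {\rm e}^{\psi(\theta){\rm i}}$ just spells out the composition the paper leaves implicit.
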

	
\section{Locality relations on vector spaces}\label{sect:locvectspaces}
In the previous section, we established a one-one correspondence between the set of
strongly \pone \loc relations on a bounded lattice and the set of orthocomplementations on the lattice. In~this section, we apply the correspondence to the subspace lattice of a finite dimensional vector space, a case of direct interest for the applications to renormalisation we have in mind, as explained in the introduction.

\subsection{Correspondence between locality relations}

From a set locality $\top _V$ on a vector space $V$, we build a poset locality relation $\top _G:=\top_{V,G}$ on~the lattice $\big(G(V), \preceq\big)$:
\begin{gather}
\label{eq:locVtoGV}
U\top _{V,G} W\qquad \text{\rm if } u\top _V w \qquad \forall\, u\in U,\ w\in W.
\end{gather}
	On the other hand,
	a set locality relation $\top_G$ on $G(V)$ (ignoring its lattice structure) induces a~locality relation $\top _V:=\top _{G,V}$ on $V$ by
\begin{gather} \label{eq:locGVtoV}
v_1\top_{G,V} v_2\qquad \text{if } \K v_1 \top _G \K v_2 \qquad\forall\, v_1,\, v_2\in V.
\end{gather} 	

As in~\cite{CGPZ1}, we call {\it locality vector space}, a vector space $V$ equipped with a (set) \loc rela\-tion~$\top$ such that the polar set $X^\top$ of any subset $X\subseteq V$ is a vector subspace of~$V$.

\begin{Proposition} \label{pp:lvtoll}
Let $\LR(V)$ denote the set of vector space locality relations $\top_V$ on $V$ and let $\LGR(V)$ denote the set of lattice locality relations $\top_G$ $($see Definition~$\ref{def:loclattice})$ on the set $G(V)$ such that
 $\{0\}^{\top _G}=G(V)$.
The equations~\eqref{eq:locVtoGV} and~\eqref{eq:locGVtoV} gives a one-one correspondence between $\LR(V)$ and $\LGR(V)$.
\end{Proposition}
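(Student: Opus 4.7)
The plan is to show separately that the maps $F\colon \LR(V)\to\LGR(V)$ (given by \eqref{eq:locVtoGV}) and $G\colon \LGR(V)\to\LR(V)$ (given by \eqref{eq:locGVtoV}) are well-defined, and then that they are mutually inverse. The key technical tool throughout will be Proposition~\ref{prop:locvee}, which reduces any lattice locality assertion on $G(V)$ to a collection of assertions on the one-dimensional subspaces $\K v$, together with the compatibility of a poset locality with the partial order.

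For the well-definedness of $F$, given $\top_V\in \LR(V)$, symmetry of $\top_{V,G}$ is inherited from $\top_V$. For any $U\in G(V)$, the polar set $U^{\top_{V,G}}$ equals $\{W\in G(V)\mid W\subseteq U^{\top_V}\}$; since $U^{\top_V}$ is a vector subspace of $V$ by hypothesis, this is exactly the principal lattice ideal $\downarrow U^{\top_V}$ in $G(V)$, hence a lattice ideal. Moreover, the hypothesis that $\{0\}^{\top_V}$ is a vector subspace forces $0\in\{v\}^{\top_V}$ for every $v$, so $v\top_V 0$ always holds and $\{0\}^{\top_{V,G}}=G(V)$. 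Conversely, given $\top_G\in\LGR(V)$, one checks that for any $X\subseteq V$ the polar $X^{\top_{G,V}}$ is a subspace: $0\in X^{\top_{G,V}}$ because $\{0\}^{\top_G}=G(V)$; closure under scalar multiplication follows from $\K(\lambda v)\preceq \K v$ and the compatibility of the poset locality with $\preceq$; and closure under addition follows from $\K(v_1+v_2)\preceq \K v_1+\K v_2$ together with the fact that $(\K x)^{\top_G}$ is a lattice ideal, hence closed under $+$.

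To show the two maps are inverse, the equality $G\circ F=\id$ reduces to the tautology that if $v_1\top_V v_2$ then $\lambda v_1\top_V \mu v_2$ for all scalars (which follows because $\{v_1\}^{\top_V}$ and $\{\mu v_2\}^{\top_V}$ are subspaces), and the converse is immediate by taking $\lambda=\mu=1$. The delicate direction is $F\circ G=\id$: starting from $\top_G$, one must show that if $\K u \top_G \K w$ for every $u\in U$, $w\in W$, then $U\top_G W$. Here, pick bases $\{u_i\}$, $\{w_j\}$ of $U$ and $W$ (using finite-dimensionality of $V$); then $\K w_j\in (\K u_i)^{\top_G}$ for all $j$, and because $(\K u_i)^{\top_G}$ is a lattice ideal, $W=\bigvee_j \K w_j\in (\K u_i)^{\top_G}$; applying Proposition~\ref{prop:locvee} to $U=\bigvee_i \K u_i$ then yields $W\in U^{\top_G}$, i.e., $U\top_G W$. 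The main obstacle is precisely this last step, where one must carefully exploit both the lattice ideal property of polars and the join-compatibility identity \eqref{eq:locvee} in order to pass from locality on generating lines to locality between arbitrary subspaces.
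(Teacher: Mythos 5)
Your proposal is correct and follows essentially the same route as the paper's proof: both reduce everything to one-dimensional subspaces, use the identity $U^{\top_{V,G}}=\downarrow\big(U^{\top_V}\big)$ and the lattice-ideal property of polars (your basis argument for $F\circ G=\id$ is just a hands-on version of the paper's identity $X^{\top_{G,V}}=\grt\big((\K X)^{\top_G}\big)$). One trivial slip: it is the hypothesis that $\{v\}^{\top_V}$ (not $\{0\}^{\top_V}$) is a subspace that forces $0\in\{v\}^{\top_V}$ and hence $\{0\}^{\top_{V,G}}=G(V)$.
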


\begin {proof}
Let $\top_V$ be a vector locality relation on $V$. Then for any $W\in G(V)$, $W^{\top_V}$ is a subspace and hence contains $0$. Thus we have $\{0\}^{\top_{V,G}}=G(V)$.
Clearly, for any $U\in G(V)$,
\begin{gather*}
U^{\top _V}\top _{V,G} \ U
\end{gather*}
and, for $W\in G(V)$, if $W\top _{V,G} \ U$, then $W\subseteq U^{\top _V}$. Therefore,
\begin{gather}\label{eq:relvr}
 U^{\top_{V,G}} = \downarrow \big(U^{\top_V}\big),
 \end{gather}
which is a lattice ideal. Hence $\top_{V,G}$ is in $\LGR(V)$.

Conversely, give a locality relation $\top_G\in \LGR(V)$, then $\{0\}^{\top _{G}}=G(V)$. So for any $v\in V$, $\{0\}\top _{G} \K v$, yielding $0\top_{G,V} v$.
Next let $X \subseteq V$. Then the lattice ideal $(\K X)^{\top_G}$ has a greatest element $\grt (\K X)^{\top_G}=\sum _{W\top _G \K X}W $. If $W\top _G \K X$, then $W\subseteq X^{\top_{G,V}}$, that is $\grt (\K X)^{\top_G} \subseteq X^{\top_{G,V}}$. On the other hand, if $y\in X^{\top_{G,V}}$, then $\K y \top _G \K X$, so $y\in\K y\subseteq \grt (\K X)^{\top_G}$. Thus
\begin{gather} \label{eq:relrv}
X^{\top_{G,V}}= \grt (\K X)^{\top_G}
\end{gather}
is a subspace. That is, ${\top_{G,V}}\in \LGR (V)$.

Thus we obtain maps
\begin{gather*}
\phi\colon\quad\LR(V)\longrightarrow \LGR(V),\qquad \top_V\mapsto \top_{V,G},
\\
\psi\colon\quad\LGR(V)\longrightarrow \LR(V),\qquad \top_G\mapsto \top_{G,V}.
\end{gather*}

We set
\begin{gather*}
\top_{V,G,V}:=\psi(\phi(\top_V))=\psi(\top_{V,G}).
\end{gather*}
Then for $X\subseteq V$, by~\eqref{eq:relvr} and~\eqref{eq:relrv} we have
\begin{gather*}
X^{\top_{V,G,V}}= \grt \big( (\K X)^{\top_{V,G}}\big) =\grt \big (\!\downarrow (\K X)^{\top_V}\big)
=(\K X)^{\top_V} = X^{\top_V}.
\end{gather*}

Similarly, let us set
\begin{gather*}
\top_{G,V,G}:=\phi(\psi(\top_G)) = \psi(\top_{G,V}).
\end{gather*}
For a principal poset ideal $I=\downarrow a$, we have $\downarrow \grt (I) = \downarrow \grt (\downarrow a) = \downarrow a=I$.
Thus for $U\in G(V)$, by~\eqref{eq:relvr} and~\eqref{eq:relrv} we obtain
\begin{gather*}
U^{\top_{G,V,G}} = \downarrow \big(U^{\top_{G,V}}\big) = \downarrow \big (\grt \big(U^{\top_G}\big)\big) =U^{\top_G}.
\end{gather*}

We have proved that the maps $\phi$ and $\psi$ are mutual inverses. This proves the proposition.
\end{proof}

\subsection {Non-degenerate locality relations on vector spaces}
\begin{Definition}
\label{de:vpproper}
A \loc relation $\top$ on a vector space $V$ is called \emph{non-degenerate} if $v\top v \Rightarrow v=0$ for any $v\in V$,
it is called \emph{strongly non-degenerate} if moreover for any subspace $ U \subsetneqq
V$, the polar space $U^\top $ is nonzero.
\end{Definition}

\begin {lemma}%\label{lem:strongnondeg}
In a strongly non-degenerate locality vector space $(V, \top _V)$, for any $U\leqq V$,
\begin{gather*}%\label{eq:strongnondeg}
V=U\oplus U^{\top _V}.
\end{gather*}
\end{lemma}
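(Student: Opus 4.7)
The plan is to first establish that the sum $U + U^{\top_V}$ is direct, and then show, by contradiction using the strong non-degeneracy hypothesis, that this sum fills all of $V$.

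For the directness of the sum, I would take any $v \in U \cap U^{\top_V}$. By definition of the polar set, $v \top_V u$ for all $u \in U$, and in particular $v \top_V v$ since $v \in U$. Non-degeneracy of $\top_V$ then forces $v = 0$, so $U \cap U^{\top_V} = \{0\}$ and the sum $U + U^{\top_V}$ is direct.

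The main content lies in showing $U \oplus U^{\top_V} = V$. I would argue by contradiction: set $W := U + U^{\top_V}$, which is a linear subspace of $V$ (note $U^{\top_V}$ is a subspace by the assumption that $(V,\top_V)$ is a locality vector space). If $W \subsetneqq V$, then strong non-degeneracy yields a nonzero $w \in W^{\top_V}$. The key observation is then the following antitonicity of polar sets: since $U \subseteq W$, we have $W^{\top_V} \subseteq U^{\top_V}$, so $w \in U^{\top_V} \subseteq W$. Thus $w \in W \cap W^{\top_V}$, which gives $w \top_V w$, and non-degeneracy forces $w = 0$, contradicting the choice of $w$. Hence $W = V$, as required.

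The only step that requires some care is verifying that $U^{\top_V}$ (and therefore $W$) is indeed a linear subspace so that strong non-degeneracy can be applied to $W$; this is immediate from the definition of a locality vector space. Everything else reduces to a standard polar-set manipulation combined with the two non-degeneracy hypotheses, so I expect no real obstacle.
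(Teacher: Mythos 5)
Your proposal is correct and follows essentially the same route as the paper: directness of the sum comes from non-degeneracy applied to an element of $U\cap U^{\top_V}$, and fullness comes from applying strong non-degeneracy to $W=U+U^{\top_V}$ and deriving $w\top_V w$ for any $w$ in its polar via antitonicity of the polar operation. The only cosmetic difference is that you locate $w$ in $W\cap W^{\top_V}$ whereas the paper places it in $U^{\top_V}\cap\big(U^{\top_V}\big)^{\top_V}$; these are the same observation.
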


\begin {proof}
By strong non-degeneracy, we have
\begin{gather*}
V=U+U^{\top _V},
\end{gather*}
otherwise we can find $0\neq v\in \big(U+U^{\top _V}\big)^{\top _V}$ which means $v\in U^{\top _V}$ and $v\in \big(U^{\top_V}\big)^{\top_V}$, so $v\top _V v$. Then $v=0$, a contradiction. Further, the non-degeneracy gives
\begin{gather*}
U\cap U^{\top _V}=\{0\},\end{gather*}
yielding the conclusion.
\end {proof}

\begin {Proposition}
\label {prop:VGV2}
A locality $\top _V$ on a vector space $V$ is
\begin{enumerate}
\item %\label{it:vgv2a}
non-degenerate if and only if $(G(V),\top_{V,G})$ is a \loc lattice which has the separating property:
$U \top_{V,G}\ W\Rightarrow U\cap W=0$,
\item %\label{it:vgv2b}
strongly non-degenerate if and only if $(G(V),\top_{V,G})$ is a strongly separating \loc lattice.
\end{enumerate}
In this case, the map $U\mapsto U^{\top_V}$ defines a orthocomplementation on $G(V)$.
\end{Proposition}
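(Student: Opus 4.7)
The plan is to exploit the bijection $\phi\colon \LR(V)\to \LGR(V)$ of Proposition~\ref{pp:lvtoll}, which already guarantees that $\top_{V,G}$ is a lattice locality relation on $G(V)$ satisfying $\{0\}^{\top_{V,G}}=G(V)$ together with the structural identity $U^{\top_{V,G}}=\,\downarrow\! U^{\top_V}$. In particular $\grt\big(U^{\top_{V,G}}\big)=U^{\top_V}$ automatically exists, so condition $(b)$ of Definition~\ref{def:splitlattice} is free, and the whole proposition reduces to matching the non-degeneracy hypotheses on $\top_V$ with condition $(a)$ of Definition~\ref{def:splitlattice} and with condition $(c)$ of Definition~\ref{def:strongsplitlattice}.

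For part~(1), the forward implication is immediate: if $U\top_{V,G} W$ and $v\in U\cap W$, then unwinding the definition gives $v\top_V v$, hence $v=0$ by non-degeneracy, so $U\cap W=\{0\}$. The converse is the specialisation $U=W=\K v$. For part~(2), I first translate strong separation into a condition on $\top_V$: combining $\grt\big(U^{\top_{V,G}}\big)=U^{\top_V}$ with Lemma~\ref{lem:maxtop} yields $\big(U^{\top_{V,G}}\big)^{\top_{V,G}}=\big(U^{\top_V}\big)^{\top_{V,G}}=\,\downarrow\!\big(U^{\top_V}\big)^{\top_V}$, so $\grt\big(\big(U^{\top_{V,G}}\big)^{\top_{V,G}}\big)=\big(U^{\top_V}\big)^{\top_V}$, and strong separation of $\top_{V,G}$ is equivalent (on top of separation) to $\big(U^{\top_V}\big)^{\top_V}=U$ for every $U\preceq V$. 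Assuming $\top_V$ strongly non-degenerate, I will apply the lemma preceding the proposition to both $U$ and $U^{\top_V}$ to decompose $V=U\oplus U^{\top_V}=U^{\top_V}\oplus\big(U^{\top_V}\big)^{\top_V}$; the trivial inclusion $U\subseteq\big(U^{\top_V}\big)^{\top_V}$ combined with the resulting equality of dimensions produces the sought equality. Conversely, if $\top_{V,G}$ is strongly separating but $U^{\top_V}=\{0\}$ for some $U\subsetneqq V$, then $U^{\top_{V,G}}=\{0\}$ and hence $\big(U^{\top_{V,G}}\big)^{\top_{V,G}}=\{0\}^{\top_{V,G}}=G(V)$ has greatest element $V$; strong separation then forces $U=V$, a contradiction.

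The final assertion that $U\mapsto U^{\top_V}$ is an orthocomplementation on $G(V)$ will then drop out by invoking Theorem~\ref{thm:complement_strongly_local}(1) on the strongly separating lattice locality $\top_{V,G}$ just obtained, since the orthocomplementation it produces is precisely $U\mapsto \grt\big(U^{\top_{V,G}}\big)=U^{\top_V}$. The one genuinely nontrivial step is the implication \emph{strongly non-degenerate $\Rightarrow$ strongly separating}, and more specifically promoting the trivial inclusion $U\subseteq \big(U^{\top_V}\big)^{\top_V}$ to an equality; this is where finite-dimensionality enters decisively, via the dimension count on the two direct sum decompositions. Everything else is either a direct unwinding of definitions or an immediate consequence of the dictionary established in Proposition~\ref{pp:lvtoll}.
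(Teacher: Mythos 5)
Your proof is correct and follows essentially the same route as the paper: part (1) by unwinding definitions, part (2) via the identity $\grt\big(U^{\top_{V,G}}\big)=U^{\top_V}$ and the decomposition lemma, and the final assertion via Theorem~\ref{thm:complement_strongly_local}. The only difference is that you make explicit, via the two direct-sum decompositions and a dimension count, the step $U=\big(U^{\top_V}\big)^{\top_V}$ that the paper merely asserts.
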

	
\begin {proof}
$(a)$
Let $(V,\top_V)$ be a non-degenerate \loc vector space. Then by Proposition~\ref{pp:lvtoll}, $\top_{V,G}$ equips $G(V)$ with a \loc lattice. If $U\top_{V,G} W$, then for $v\in U\cap W$, $v\top _V v$, so $v=0.$ Thus $U \cap W =\{0\}$.

Now suppose that $(G(V), \top_{V,G})$ is a \loc lattice with the separating property. We~already know $(V, \top _V)$ is a locality vector space. If $v\top _V v$, then $\K v\top _{V,G} \K v$. So from the separating condition we have $\K v=\K v\cap \K v=\{0\}$. So $v=0$, which means $(V, \top_V)$ is non-degenerate.

$(b)$
If $(V, \top _V )$ is a strongly non-degenerate locality vector space, then
for any $U\subseteq V$, by~\eqref{eq:relrv}, $\grt\big(U^{\top _{V,G}}\big)=U^{\top _V}$.
Further, we have $U\in \big(U^{\top _{V,G}}\big)^{\top _{V,G}}$ by definition. By non-degeneracy of $\top _V$, $V=U\oplus U^{\top _V}$ and $U=\big(U^{\top _V}\big)^{\top _V}$. Thus
\begin{gather*}
\big(U^{\top _{V,G}}\big)^{\top _{V,G}}=\big(\grt\big(U^{\top _{V,G}}\big)\big)^{\top _{V,G}}=\big(U^{\top _V}\big)^{\top _{V,G}}
\end{gather*}
and hence
\begin{gather*}
\grt \big(U^{\top _{V,G}}\big)^{\top _{V,G}}=\grt \big(U^{\top _V}\big)^{\top _{V,G}}=\big(U^{\top _V}\big)^{\top _V}=U.
\end{gather*}

Now assume that $(G(V), \top _{V,G})$ is a strongly separating \loc lattice. If $U\subsetneqq V$, then $U^{\top _{V,G}} \not =\{0\}$, otherwise $\big(U^{\top _{V,G}}\big)^{\top _{V,G}}=G(V)$, we have $U=V$ by taking the greatest element. Now take $0\neq w\in U^{\top _{V,G}}$. Then $\K w\leq U^{\top _{V,G}}$. Hence $\K w \top _{V,G} U$, and $\K w \top _{V,G} \K u$ for any $u\in U$, that is, $w\in U^{\top _V}$. Thus $U^{\top_V}\neq \{0\}$.

The last assertion is a consequence of Theorem~\ref{thm:complement_strongly_local}.
\end{proof}

\subsection{Locality bases}
By Proposition \ref{prop:VGV2}, we can construct locality relations on subspace lattice from locality relations on the underlying vector space so that we now focus on vector spaces. In~much the same way as a Euclidean vector space can be equipped with an orthogonal basis, we show that a vector space with a strongly
\pone \loc relation possesses a basis that is compatible with the relation. However, the relation is not uniquely determined by this basis.

\begin{Definition} \label{defn:compl_map_TVS}
For locality vector space $(V, \top)$, a basis ${\mathcal B}=\{e_\alpha \}_{\alpha \in \Gamma} $ of $V$ is called a {\it \loc basis} for $\top$ if the basis vectors are mutually independent for $\top$, i.e.,
\begin{gather*}%\label{eq:localitybasis}
\text{ if } \alpha \neq \beta \qquad \text{ then } e_\alpha \top e_\beta.
\end{gather*}
\end{Definition}

We now study \loc relations on a vector space $V$ with a countable basis. We~start with a generalisation to arbitrary strongly separating locality relations (instead of orthogonality) of \cite[Chapter II, Theorem 1]{Gross79}.
\begin{Proposition}\label{prop:Psisplitting}	
Let $(V, \top) $ be a strongly \pone \loc vector space of countable dimension.
%with a countable basis, and $\top$ be a strongly \pone \loc relation on $V$,
Then $V$ has a \loc basis for $\top$.
\end{Proposition}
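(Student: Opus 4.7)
The plan is to run a Gram--Schmidt type induction, replacing the orthogonal decomposition by the direct-sum decomposition $V = U \oplus U^\top$ provided by the lemma following Definition~\ref{de:vpproper}. Fix an arbitrary countable basis $(v_n)_{n \ge 1}$ of $V$. I will construct inductively a family $(e_n)_{n \ge 1}$ satisfying, for every $n$, the two conditions $\mathrm{span}\{e_1,\dots,e_n\} = \mathrm{span}\{v_1,\dots,v_n\}$ and $e_i \top e_j$ whenever $1 \le i \ne j \le n$.

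Set $e_1 := v_1$. For the inductive step, suppose $e_1, \dots, e_n$ have been built with the two properties above, and let $U_n := \mathrm{span}\{e_1,\dots,e_n\}$. Translating the ``strongly separating'' hypothesis on $(V, \top)$ into strong non-degeneracy via Proposition~\ref{prop:VGV2}, the lemma following Definition~\ref{de:vpproper} yields the splitting $V = U_n \oplus U_n^\top$. Decompose $v_{n+1} = u + w$ with $u \in U_n$ and $w \in U_n^\top$; the vector $w$ must be nonzero, for otherwise $v_{n+1} = u \in U_n = \mathrm{span}\{v_1,\dots,v_n\}$, contradicting linear independence of the $v_k$'s. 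Setting $e_{n+1} := w$, membership of $e_{n+1}$ in $U_n^\top$ gives $e_{n+1} \top e_i$ for every $i \le n$, and
\[
\mathrm{span}\{e_1,\dots,e_{n+1}\} = U_n + \K w = U_n + \K v_{n+1} = \mathrm{span}\{v_1,\dots,v_{n+1}\},
\]
completing the induction.

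To conclude, the resulting family $(e_n)$ spans $V$ since every $v_n$ lies in $\mathrm{span}\{e_1,\dots,e_n\}$, and is linearly independent because $U_n \cap U_n^\top = \{0\}$ forces each $e_{n+1} = w \ne 0$ to lie outside $U_n$. By construction $e_i \top e_j$ for all $i \ne j$, so $(e_n)_{n\ge 1}$ is a \loc basis for $\top$. The only real subtlety is the passage between the ``strongly separating'' hypothesis on $(V, \top)$ and the strong non-degeneracy needed to invoke the splitting $V = U_n \oplus U_n^\top$ at each finite stage; this is handled once and for all by Proposition~\ref{prop:VGV2}. The countability of the basis is used only to index the induction, so no transfinite step is required.
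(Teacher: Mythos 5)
Your proof is correct and follows essentially the same Gram--Schmidt induction as the paper, based on the splitting $V = U_n \oplus U_n^{\top}$ furnished by strong non-degeneracy (via Proposition~\ref{prop:VGV2} and the lemma following Definition~\ref{de:vpproper}). The only cosmetic difference is that you decompose the vector $v_{n+1}$ directly, whereas the paper invokes modularity of $G(V)$ to decompose the subspace $W_{n+1}$ as $W_n\oplus\big(\Psi^{\top}(W_n)\cap W_{n+1}\big)$; your variant even sidesteps a small imprecision in the paper's wording, which selects $u_{n+1}$ as a nonzero element of $\Psi^{\top}(W_n)$ rather than of $\Psi^{\top}(W_n)\cap W_{n+1}$.
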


\begin{proof} This process is similar to the Gram--Schmidt process in linear algebra. By assumption, $V$ admits a countable basis ${\mathcal B}=\{e_n\,|\, n\in \N\}$. We~apply the induction on $n$ to construct a~locality basis $\{u_1,\dots,u_n\}$ for the subspace $W_n$ of $V$ spanned by $\{e_1,\dots,e_n\}$.

First take $u_1=e_1$ and then $\K\{u_1\}=\K\{e_1\}$. Assume that a \loc basis $\{u_1,\dots,u_n\}$ of~$W_n:=\K\{e_1,\dots, e_n\}$ has been constructed.
Let~$\Psi^\top$ be the polar map
induced by $\top$: $\Psi^\top(W):=\grt\big(W^\top\big)$.
By Example~\ref{ex:GVmodularity}, $G(V)$ is modular. So we have
$W_{n+1}=W_n\oplus \big(\Psi^\top(W_n)\cap W_{n+1}\big)$. Let $u_{n+1}$ be a nonzero element in $\Psi^\top(W_n)$. Then $\{u_1,\dots,u_{n+1}\}$ is a locality basis of~$W_{n+1}$. This completes the induction. Then $\{u_i\}_{i\geq 1}$ is a \loc basis of $V$.
\end{proof}

We now provide an example that shows that, unlike a basis which uniquely determines a~vector space, a~locality basis is not enough to determine the locality vector space, suggesting the richness of locality relations on a vector space.

\begin{Remark}
	A strongly non-degenerate locality relation on $\R^2\simeq \C$ therefore has infinitely many locality bases $\big\{{\rm e}^{\theta {\rm i}}, {\rm e}^{\psi(\theta){\rm i}}\big\}$ parametrised by $\theta\in [0, \pi)$.
\end{Remark}
\begin{proof}By Proposition~\ref{prop:VGV2}, strongly non-degenerate locality relations on $\R^2$ are uniquely deter\-mi\-ned by strongly separating relations on $G\big(\R^2\big)$. By Corollary~\ref{cor:topGR2}, these in turn are in one-one correspondence with involutive maps $\psi\colon [0,\pi)\to [0, \pi)$ without fixed points, which give a~strongly separating orthocomplement $\Psi\colon \R {\rm e}^{\theta {\rm i}}\mapsto \R {\rm e}^{\psi(\theta){\rm i}}$. Thus strongly non-degenerate loca\-lity relations on $\R^2$ are determined by involutive maps $\psi\colon [0,\pi)\to [0, \pi)$ without fixed points. \end{proof}

\appendix
\section{An alternative correspondence}

Following the suggestion by a referee, we present here another class of symmetric binary relations which can be put in one-one correspondence with a weak form of orthocomplementations. This is carried out on the class of complete atomistic lattices to give a one-one correspondence between a class of locality lattice relations and certain locality relations on the set of atoms of $L$.
\begin{Definition}Let $L$ be a lattice bounded from below by $0$.
	\begin{enumerate}\itemsep=0pt
		\item An {\it atom} in $L$ is an element $p$ in $L$ that is minimal among the non-zero elements, i.e., such that for $x\in L$, $x< p$ if and only if $x=0$.
		\item $L$ is an {\it atomic} lattice whose elements $a$ are all bounded from below by some atom, i.e., there is an atom $p$ in $L$ such that $p\leq a$.
		\item
		A lattice is {\it atomistic} if it is atomic and every element is a join of some finite set of atoms.
	\end{enumerate}
\end{Definition}
\begin{Example}
	The lattice of divisors of $4$, with the partial ordering ``is divisor of", is atomic, with $2$ being the only atom. It is not atomistic, since $4$ cannot be obtained as least common multiple of atoms.
\end{Example}
\begin{Definition} A lattice is {\it complete} if all subsets have both a supremum (join) and an infimum (meet).
\end{Definition}\begin{Example} Every non-empty finite lattice is complete.
\end{Example}

\begin{Example}Let $V$ be a finite dimensional vector space.
	The subspace lattice $G(V)$ con\-si\-de\-red previously is an atomistic complete lattice with atom set the set $P(V):=\{\K v, v\in V\setminus\{0\}\}$ of one dimensional subspaces of $V$.
\end{Example}
We first give a useful preparation lemma on posets.
\begin{Lemma}\label{lemmaA6}
	Given a poset $(L, \leq)$ bounded from below by $0$, there is a one-one correspondence between
	\begin{itemize}\itemsep=0pt
		\item[$(1)$] {\it poset locality} relations $\mtop$ on $L$
$($cf.\ Definition~$\ref{de:locposet})$ such that $0\mtop x$ for all $x\in L $ with the {\it separating property}: for any $x\in L$, the set $x^\mtop$ admits a greatest element $\grt \big(x^\mtop\big)$.
		%\label{it:sym}
		\item[$(2)$] {\it antitone} maps $\Psi\colon L\to L$, $x\mapsto x'$ $($i.e., $x\leq y\Rightarrow y'\leq x'$$)$ such that $x\leq (x')'.$\footnote{Note that this is implied by the involutivity required of an orthocomplementation.}
		%\label{it:map}
	\end{itemize}
	The correspondence is given by $($cf.\ Theorem~$\ref{thm:complement_strongly_local})$
	\begin{itemize}\itemsep=0pt
		\item $\Psi^\mtop(x):=x':=\grt \big(x^\mtop\big)$,
		\item $x\mtop y$ if and only if $y\leq x'$.
	\end{itemize}
	
	Moreover, let $\Psi$ be an antitone as in $(2)$. For any subset $X\subset L$, if the supremum $\sup X$ of~$X$ in $L$ exists, then so does the infimum $\inf Y$ of the set
	$Y=\{x'\,|\, x\in X\}$, and~$(\sup X)'=\inf Y$.
\end{Lemma}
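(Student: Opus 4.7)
The plan is to show both directions of the correspondence, then verify they are mutual inverses, and finally establish the supremum/infimum compatibility.

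First, given a separating poset locality $\top$ as in $(1)$, I would define $\Psi^\top(x):=\grt\big(x^\top\big)$, which is well-defined by the separating assumption. Antitonicity follows directly from compatibility condition $(a)$ of Proposition-Definition~\ref{de:locposet}: $x\leq y$ implies $y^\top\subseteq x^\top$, whence $\grt\big(y^\top\big)\leq \grt\big(x^\top\big)$. For the inequality $x\leq (x')'$, I would note that $\grt\big(x^\top\big)\in x^\top$ so by symmetry $x\in \big(\grt\big(x^\top\big)\big)^\top=(x')^\top$, and taking the greatest element yields $x\leq \grt\big((x')^\top\big)=(x')'$.

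Conversely, given an antitone map $\Psi$ with $x\leq (x')'$, I would define $x\top y\iff y\leq x'$. Symmetry follows from: if $y\leq x'$, then antitonicity gives $(x')'\leq y'$, and since $x\leq (x')'\leq y'$, we get $x\top y \Rightarrow y\top x$. The assumption $0\leq x'$ for all $x$ gives $(x')'\leq 0'$, so $x\leq 0'$, i.e., $0\top x$. To verify poset locality, I observe that $x^\top=\{y\in L\,|\, y\leq x'\}=\downarrow x'$, which is a poset ideal, so condition $(b)$ of Proposition-Definition~\ref{de:locposet} holds. The separating property is automatic: $\grt\big(x^\top\big)=x'$.

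For the bijection, starting from $\top$ and iterating: the recovered locality is $x\top' y\iff y\leq \grt\big(x^\top\big)$, and since $x^\top$ is a downward-closed set with greatest element $\grt\big(x^\top\big)$, this equals $y\in x^\top$. Starting from $\Psi$ and iterating: the recovered map sends $x$ to $\grt(\!\downarrow\! \Psi(x))=\Psi(x)$.

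Finally, for the compatibility with suprema, given $X\subset L$ with $\sup X$ existing, and $Y=\{x'\,|\,x\in X\}$, antitonicity yields $(\sup X)'\leq x'$ for each $x\in X$, so $(\sup X)'$ is a lower bound of $Y$. For any other lower bound $z$ of $Y$, I have $z\leq x'$ for each $x\in X$, so $x\leq (x')'\leq z'$ for each $x\in X$ (using both antitonicity and the inequality $x\leq (x')'$). Hence $\sup X\leq z'$, and applying antitonicity once more together with $z\leq (z')'$ gives $z\leq (z')'\leq (\sup X)'$. Thus $(\sup X)'=\inf Y$, in particular the infimum exists.

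The only subtlety worth flagging is the use of the inequality $x\leq (x')'$ (rather than the stronger involutivity of an orthocomplementation) in both the symmetry step and the infimum argument; this is precisely why a weaker form of complementation suffices here, matching the weaker condition defining the separating locality relations in $(1)$.
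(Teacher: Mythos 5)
Your proposal is correct and follows essentially the same route as the paper's own proof: define $\Psi^\top(x)=\grt\big(x^\top\big)$ and use symmetry of $\top$ to get $x\leq (x')'$, define $x\top_\Psi y$ by $y\leq\Psi(x)$ and derive symmetry from antitonicity together with $x\leq(x')'$, and run the identical lower-bound argument for $(\sup X)'=\inf Y$. Your explicit verification that the two assignments are mutual inverses (via $x^\top=\downarrow\grt\big(x^\top\big)$ and $\grt(\downarrow\Psi(x))=\Psi(x)$) is slightly more detailed than the paper, which leaves that step largely implicit.
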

\begin{proof}
Given a poset locality $\top$ on $L$ in (1) and let $\Psi^\top (x):=\grt \big(x^\top\big)$. Then from $x\leq y$, we obtain $x^\top \supseteq y^\top$ and hence $\grt\big(x^\top\big) \geq \grt\big(y^\top\big)$.
Further, since $x\top \grt \big(x^\top\big)$, we have $x\in \big(\grt\big(x^\top\big)\big)^\top$ and thus
$x\leq \grt\big(\big(\grt\big(x^\top\big)\big)^\top\big).$

Given an antitone map $\Psi\colon L\to L$ in (2) and define $x\top_\Psi y$ if $y\leq x'=\Psi(x)$. The symmetry of~$\mtop$ follows from the antitone property: $y\leq x'$ implies $x\leq (x')'\leq y'$. Further $0\top_\Psi\, x$ for any $x\in L$ since $0$ is the smallest element. It also follows from the definition of $\top_\Psi$ that $\grt \big(x^{\top_\Psi}\big)=\Psi(x)$.

Now let $\Psi$ be an antitone as in (2) and $X\subseteq L$.
If $a=\sup X$ exists, then $x'\geq a'$ for all $x\in X$ and hence $a'$ is a lower bound of $Y$. Further, for any lower bound $z$ of $Y$, we have $z\leq x'$ for all $x\in X$. Then $x\leq (x')'\leq z'$. Thus $a=\sup X\leq z'$ and then $z\leq a'$. Therefore $a'=\inf Y$.
\end{proof}
\begin{Remark} \label{rk:posetltt}
If a poset $(L,\leq)$ is a lattice, then a poset locality relation on $L$ satisfying (1) is also a lattice locality relation, since the separating property means that $x^\top=\downarrow \grt\big(x^\top\big)$ and hence is a lattice ideal.
\end{Remark}
\begin{Proposition}
	Given an atomistic complete lattice $L$ with set $P$ of atoms.
	For $a\in L$, define $P_a:=\{p\in P\,|\, p\leq a\}=\downarrow a \cap P$.
	
	There is a one-one correspondence between lattice locality relations $\mtop_L$ on $L$ as in $(1)$ of Lemma~{\rm \ref{lemmaA6}} and
	\begin{itemize}\itemsep=0pt
		\item [$(3)$] symmetric binary relations $\top_P$ on $P$ such that for any $p\in P$ there is (unique) $a\in L$ such that $p^{\top_P}=P_a$.
		%One defines $\Psi^\top(p):=a$.
		%\label{it:perp}
	\end{itemize}
The correspondence is given by
	\begin{itemize}\itemsep=0pt
		\item $\top_P$ is the restriction of $\mtop_L$ to $P$,
		\item $\mtop_L$ is defined in terms of $\top_P$ by: $a\mtop_L b$ if and only if
		\begin{enumerate}\itemsep=0pt
\item $a=0$ or $b=0$, or

\item $a\neq 0$, $b\neq 0$, and $p\top_P\, q$ for all $p, q\in P$ with $p\leq a$ and $q\leq b$.
		\end{enumerate}
	\end{itemize}
\end{Proposition}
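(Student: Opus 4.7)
The plan is to construct two maps between the two classes and verify they are mutually inverse. Let $F$ send a locality relation $\top_L$ of class~(1) to its restriction $\top_P:=\top_L|_{P\times P}$, and let $G$ send a symmetric relation $\top_P$ of class~(3) to the relation $\top_L$ on $L$ given by the displayed formula in the statement.

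First I check that $F$ is well-defined. Given $\top_L$ satisfying~(1) (automatically a lattice locality by Remark~\ref{rk:posetltt}), for each atom $p$ the polar set $p^{\top_L}$ is a lattice ideal whose greatest element $a_p:=\grt\big(p^{\top_L}\big)$ exists by the separating property, so $p^{\top_L}=\downarrow a_p$. Intersecting with $P$ yields $p^{\top_P}=P_{a_p}$, which is condition~(3). Symmetry transfers for free.

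Next I check that $G$ is well-defined. The witness $b_p$ attached to $p$ via $p^{\top_P}=P_{b_p}$ is unique thanks to atomisticity, since $a\mapsto P_a$ is injective via $a=\bigvee P_a$. Symmetry of $\top_L:=G(\top_P)$, the base condition $0\,\top_L\,x$, and compatibility with the partial order are a direct case check on whether the arguments are zero. The substantive point is the separating property. For $a\neq 0$ set
\begin{gather*}
a':=\bigwedge_{p\in P_a} b_p,
\end{gather*}
which exists by completeness of $L$. To see $a\,\top_L\,a'$, every atom $q\leq a'$ lies below every $b_p$ with $p\in P_a$, so $q\in p^{\top_P}$, i.e.\ $p\,\top_P\,q$. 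For maximality, if $a\,\top_L\,c$ with $c\neq 0$ then every atom $q\leq c$ satisfies $q\in p^{\top_P}=P_{b_p}$ for each $p\in P_a$, forcing $q\leq a'$; atomisticity then gives $c=\bigvee P_c\leq a'$. When $a=0$ the polar set is all of $L$, whose greatest element is the top of $L$ (which exists by completeness).

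Finally I verify the two compositions are the identity. For $F\circ G$, restricting the defining formula to atoms collapses it (the only atom below an atom $p$ is $p$ itself) back to $p\,\top_P\,q$. For $G\circ F$, I need $a\,\top_L\,b$ to be equivalent to $p\,\top_L\,q$ for all atoms $p\leq a$, $q\leq b$. The forward direction follows from the poset-locality compatibility of $\top_L$. The reverse direction uses atomisticity to write $a$ and $b$ as finite joins of atoms, together with the lattice-ideal property of polar sets (closure under finite joins), to promote pointwise locality on atoms to $a\,\top_L\,b$. The main obstacle is the separating-property step for $G$: producing $\grt\big(a^{\top_L}\big)$ explicitly as the meet $\bigwedge_{p\in P_a} b_p$ and checking both $a\,\top_L\,a'$ and the maximality of $a'$; this is where completeness and atomisticity are both indispensable.
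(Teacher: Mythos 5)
Your proof is correct and follows essentially the same route as the paper's: restrict to atoms in one direction, extend by the displayed formula in the other, verify the separating property via completeness, and check the two compositions using atomisticity and the lattice-ideal property of polar sets. The only cosmetic difference is that you exhibit $\grt\big(a^{\top_L}\big)$ as the meet $\bigwedge_{p\in P_a} b_p$, whereas the paper realises it as the join $\bigvee\big\{q\in P\,|\, q\in a^{\top_L}\big\}$; the two elements coincide, since the atoms below your $a'$ are exactly the atoms in $a^{\top_L}$.
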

\begin{proof}
Given $\top_L$ as in (1) and let $\top_P$ be as defined. For any $p\in P$, $p^{\top_L}$ has the greatest element $a:=\grt \big(p^{\top_L}\big)$. Then $p^{\top_L}=\downarrow a$. Then $p^{\top_P}=p^{\top_L}\cap P= \downarrow a \cap P=P_a$. Thus $\top_P$ satisfies~(3).

Given $\top_P$ as in (3) and let $\top_L$ be as defined. Then $\top_L$ is obviously symmetric. Also $0\in L^{\top_L}$ by definition.

Now we prove that $a^{\top _L}$ is a lattice ideal for any $a\in L$. If $a=0$, then $0^{\top _L}=L$ which is a~lattice ideal. First consider $a=p\in P$. Then $p^{\top _P}=P_b$ for some $b\in L$. We~just need to show $p^{\top_L}=\downarrow b$.
	
First let $c\in \downarrow b$, that is, $c\le b$. Then for any $q\le c$ with $q\in P$, we have $q\in P_b$. So $q\top _P \ p$, thus $c\top _L \ p$, that is, $c\in p^{\top_L}$. This proves $\downarrow b \subseteq p^{\top_L}$.
On the other hand, let $c\in p^{\top_L}$, that is, $c\top_L\, p$. Then for any $q\leq c$ with $q\in P$, we have $q\top_P p$. Thus $q\in P_b$, that is, $q\in \downarrow b$, implying $c\in \downarrow b$ since $L$ is atomistic. This proves $p^{\top_L}\subseteq \downarrow b$.

As a consequence, we have $p\top_L b$ and $p\top_L c$ implying $p\top_L(b\vee c)$.

We now show that $a^{\top_L}$ is closed under $\vee$. For $a\top_L b, a\top_L c$ and $p\in P$ with $p\leq a$, we have $p\top_L b$ and $p\top_L\, c$. Thus $p\top_L\, (b\vee c)$ and hence $p\top_P q$ for any $q\in P$ with $q\leq b\vee c$. Therefore $a \top_L (b\vee c)$. Further, if $b\in a^{\top_L}$ and $c\leq b$, then for $p\in P_a$ and $q\in P_c$, we have $q\in P_b$ and hence $p\top_P\, q$. This gives $c\in a^{\top_L}$. Thus $a^{\top_L}$ is also a poset ideal and hence a lattice ideal.

We finally show
\begin{gather*}
a^{\top_L} = \downarrow \big(\vee\big\{p\in P\,|\, p\in a^{\top_L}\big\}\big).
\end{gather*}
The right hand side is defined since $L$ is complete. The inclusion $\supseteq $ holds since $a^{\top _L}$ is a lattice ideal. Further, for $b\in a^{\top_L}$, write $b=q_1\vee \cdots \vee q_k$ with $q_1,\ldots,q_k\in P$. Since $q_1,\ldots, q_k\in a^{\top_L}$, we have $b=q_1\vee \cdots \vee q_k\leq \vee\big\{p\in P\,|\, p\in a^{\top_L}\big\}$. This completes the proof of the equality.

In summary, the locality relation $\top_L$ from $\top_P$ satisfies (1).

The one-one correspondence results from the assignment
\begin{gather*}\top_P\mapsto \top_L:=\top_{P,L}\mapsto \top_{P,L,P},
\end{gather*}
which is clearly the identity since $\top_{P,L}$ extends $\top_P$ and the assignment
\begin{gather*}
\top_L\mapsto \top_P:=\top_{L,P}\mapsto \top_{L,P,L},
\end{gather*}
 using the conditions on $T_L$.
\end{proof}
\begin{Remark}
Applying the above proposition to the subspace lattice $L:=G(V)$ of a finite dimensional $\K$-vector space $V$ whose atom set $P(V)=\{\K v\,|\, v\in V\setminus \{0\}\}$ consists of one dimensional subspaces gives another proof of Proposition~\ref{pp:lvtoll}.
\end{Remark}

{\sloppy\subsection*{Acknowledgements} This work is supported by Natural Science Foundation of China (Grant Nos.\ 11771190, 11821001, 11890663). The first and third authors are grateful to the Perimeter Institute where part of~this paper was written.
They also thank Tobias Fritz for inspiring discussions {and the third author is grateful to Daniel Bennequin for his very useful comments at a preliminary stage of~the preparation of the paper}. We~greatly appreciate very helpful suggestions of the referees on~previous versions of the paper.

}

\pdfbookmark[1]{References}{ref}
\LastPageEnding

\end{document}